\newcommand{\Z}{\mathbb{Z}}
\newcommand{\comment}[1]{}
\DeclareMathOperator{\Ass}{Ass}
\DeclareMathOperator{\Min}{Min}
\def\1{{\bf 1}}
\def\0{{\bf 0}}
\numberwithin{equation}{section}
\theoremstyle{plain}
\newtheorem{theorem}{Theorem}[section]
\newtheorem{corollary}[theorem]{Corollary}
\newtheorem{lemma}[theorem]{Lemma}
\theoremstyle{definition}
\newtheorem{definition}[theorem]{Definition}
\newtheorem{remark}[theorem]{Remark}
\newtheorem{example}[theorem]{Example}
\newtheorem{question}[theorem]{Question}
\def\imod#1{\allowbreak\mkern10mu\left({\operator@font mod}\,\,#1\right)}
\begin{document}

\bibliographystyle{plain}

\title[SwanWalkarXiv]{Tensor-Multinomial Sums of Ideals: Primary Decompositions and Persistence of Associated Primes}
\author{Irena Swanson and Robert M. Walker}

\address{Department of Mathematics, Reed College, 3203 SE Woodstock Blvd, Portland, OR, 97202}
\email{iswanson@reed.edu}

\address{Department of Mathematics, University of Michigan, Ann Arbor, MI, 48109}
\email{robmarsw@umich.edu}

\begin{abstract} 
Given a polynomial ring $C$ over a field
and proper ideals $I$ and $J$
whose generating sets involve disjoint variables,
we determine how to embed the associated primes of each power of $I+J$ into a collection of primes described in terms of the associated primes of select powers of $I$ and of $J$.
We record two applications. First, in case the field is algebraically closed,
we construct primary decompositions for powers of $I+J$ from primary decompositions for powers of $I$ and $J$. Separately, we attack the persistence problem for associated primes of powers of an ideal in case one of $I$ or $J$ is a non-zero normal ideal. 
\end{abstract}


\maketitle

\section{Introduction}

Throughout, $A = K[x_1, \ldots, x_a]$,  $B = K[y_1, \ldots, y_b]$, and $C = A \otimes_K B = K[x_1, \ldots , x_a, y_1, \ldots, y_b]$, where $x_1, \ldots, x_a, y_1, \ldots, y_b$ are variables over a field $K$.  We fix ideals $I \subsetneqq A$ and $J \subsetneqq B$. By abuse of notation, we use the same symbol to denote ideals in $A$ or $B$ and their expansions to $C$. 

Our primary focus is on constructing  primary decompositions 
for the powers of the ideal $$I+J \subsetneqq C,$$
which we call a \textbf{(two-term) tensor-multinomial sum of ideals}, clarifying our chosen title.

By a result of Brodmann \cite{Brodmann} (or \cite[Theorem~3.5]{5authorSymbolicSurvey}),
the collection $\mathcal{A}_A (I) := \bigcup_{n=1}^\infty \operatorname{Ass}_A (A/I^n)$
of associated primes of powers of $I$ is a finite set.
By our abuse of notation, $\mathcal{A}_A (I) = \mathcal{A}_C (I)$.

\begin{definition}\label{def:powersprimarydecomp}
Suppose $\mathcal{A}_A (I) = \mathcal{A}_C (I) =\{P_1, \ldots, P_r\}$ 
and $\mathcal{A}_B (J) = \mathcal{A}_C (J) =\{Q_1, \ldots, Q_s\}$.  For each integer $m \ge 1$, fix primary decompositions   
$$
I^m = p_{m1} \cap \cdots \cap p_{mr} \quad \mbox{ and } \quad  J^m = q_{m1} \cap \cdots \cap q_{ms},
$$
where for each $k = 1, \ldots, r$, the ideal 
$p_{mk}$ is either $P_k$-primary or equals $C$,
and for $\ell = 1, \ldots, s$, the ideal 
$q_{m\ell}$ is either $Q_\ell$-primary or else $C$.
We also set $p_{0k} = q_{0\ell} = C$ for all $k, \ell$.
\end{definition}

\begin{definition}\label{def:SwanWalkfiltration}
A list of ideals $\{L_c\}_{c \ge 0}$ in $C$ is a {\bf filtration} if $L_0 = C$ and $L_c \supseteq L_{c+1}$ for all $c \ge 0$. 
\end{definition}
\begin{remark} This definition differs from H\`{a}--Nguyen--Trung--Trung~\cite[Section 3]{HNTTBinomial} only in that we \textit{do not} require $L_1$ to be non-zero and proper -- upon inspection this assumption is inessential for the proofs of \cite[Proposition~3.3, Theorem~3.4]{HNTTBinomial}. We briefly recall this point at the start of Section~\ref{sect:main}. \end{remark}

The technical heart of the paper lies in the following theorem, deduced in Section \ref{sect:main}: 
\begin{theorem}\label{thm:SwanWalkC}
Fix a field $K$, and polynomial $K$-algebras 
$A = K[x_1, \ldots, x_a]$,  
$B = K[y_1, \ldots, y_b]$
and $C = A \otimes_K B = K[x_1, \ldots , x_a, y_1, \ldots, y_b]$.
Let $I$ be an ideal in $A$ and $J$ an ideal in $B$.
Then for each integer $n >0$,
$$
\Ass(C/(I+J)^n)
\subseteq
\bigcup \left\{\mathcal{P} \in \Min(C/(P+Q)) :
P \in \bigcup_{i=1}^n \Ass(C/I^i), Q \in \bigcup_{j=1}^n \Ass(C/J^{j})
\right\},
$$ 
and moreover
$$
\mathcal{A}_C(I+J) =
\bigcup \{\mathcal{P} \in \Min(C/(P+Q)) : P \in
\mathcal{A}(I), Q \in \mathcal{A}(J)\}.
$$
\end{theorem}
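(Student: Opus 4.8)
The plan is to establish the per-power inclusion first and then bootstrap it to the stated equality; the bulk of the work beyond \cite{HNTTBinomial} lies in the matching lower bound, i.e.\ the $\supseteq$ direction of the equality. I would begin from $(I+J)^n=\sum_{i+j=n}I^iJ^j$ (with $I^0=A$, $J^0=B$), which exhibits $\{(I+J)^n\}_{n\ge 0}$ as the filtration on $C$ assembled from the $I$-adic filtration on $A$ and the $J$-adic filtration on $B$ in the sense of \cite[Section~3]{HNTTBinomial}; the observation recorded in the Remark, that $L_1$ need not be proper and nonzero, is precisely what lets \cite[Proposition~3.3, Theorem~3.4]{HNTTBinomial} be invoked here. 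Whether one quotes those results or argues directly, the concrete device is the finite filtration $C=\mathfrak{a}_0\supseteq\mathfrak{a}_1\supseteq\cdots\supseteq\mathfrak{a}_n=(I+J)^n$ with $\mathfrak{a}_k:=I^k+(I+J)^n$, whose successive quotients are
$$
\mathfrak{a}_k/\mathfrak{a}_{k+1}\ \cong\ \bigl(I^k/I^{k+1}\bigr)\otimes_K\bigl(B/J^{n-k}\bigr),\qquad 0\le k\le n-1.
$$
This rests on $C=A\otimes_KB$, the identification $I^iJ^j=I^i\otimes_KJ^j$ of $K$-subspaces of $C$, and the combinatorial identity $I^k\cap(I+J)^n=\sum_{i=k}^{n}I^iJ^{n-i}$, which one obtains by choosing vector-space complements adapted to the two filtrations so that every ideal in sight becomes a coordinate subspace of the decomposition $C=\bigoplus_{i',j'}A_{i'}\otimes_KB_{j'}$.

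Next I would use the formula, for a finitely generated $A$-module $M\ne 0$ and $B$-module $N\ne 0$,
$$
\Ass_C(M\otimes_KN)\ =\ \bigcup_{P\in\Ass_AM,\ Q\in\Ass_BN}\Min_C\bigl(C/(P+Q)\bigr),
$$
where replacing $\Ass_C(C/(P+Q))$ by $\Min_C(C/(P+Q))$ is legitimate because $C/(P+Q)=(A/P)\otimes_K(B/Q)$ carries no embedded primes: it is free over the domain $A/P$, and localizing in turn at the nonzero elements of $A/P$ and of $B/Q$ identifies its associated and its minimal primes with those of $\operatorname{Frac}(A/P)\otimes_K\operatorname{Frac}(B/Q)$, which is a finite free algebra over a localized polynomial ring, hence Cohen--Macaulay. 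Feeding $\Ass_A(I^k/I^{k+1})\subseteq\Ass_A(A/I^{k+1})$ and $\Ass_C(C/(I+J)^n)\subseteq\bigcup_{k=0}^{n-1}\Ass_C(\mathfrak{a}_k/\mathfrak{a}_{k+1})$ into this formula yields the first displayed inclusion of the theorem once the index set is enlarged: the powers that occur are $i=k+1$ and $j=n-k$ with $i+j=n+1$, so in particular $i,j\le n$.

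The inclusion $\subseteq$ in the second assertion is then immediate on taking the union over all $n$, since $\bigcup_{i=1}^n\Ass(C/I^i)\subseteq\mathcal{A}(I)$ and likewise for $J$. For $\supseteq$, fix $P\in\mathcal{A}(I)$, $Q\in\mathcal{A}(J)$ and $\mathcal{P}\in\Min_C(C/(P+Q))$. I would choose $i_0$ \emph{minimal} with $P\in\Ass_A(A/I^{i_0})$; minimality together with $0\to I^{i_0-1}/I^{i_0}\to A/I^{i_0}\to A/I^{i_0-1}\to 0$ forces $P\in\Ass_A(I^{i_0-1}/I^{i_0})$, so there is $f\in I^{i_0-1}\setminus I^{i_0}$ with $(I^{i_0}:_A f)=P$; symmetrically pick $j_0$ and $g\in J^{j_0-1}\setminus J^{j_0}$ with $(J^{j_0}:_B g)=Q$. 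Set $N:=i_0+j_0-1$ and let $\overline{fg}$ be the image of $fg$ in $C/(I+J)^N$. The heart of the matter is to compute the annihilator \emph{exactly}: since $fg\in I^{i_0-1}J^{j_0-1}$ and (again from the coordinate-subspace picture) $I^{i_0-1}J^{j_0-1}\cap(I+J)^N=I^{i_0}J^{j_0-1}+I^{i_0-1}J^{j_0}$, one finds that $\Ann_C(\overline{fg})=\bigl(I^{i_0}J^{j_0-1}+I^{i_0-1}J^{j_0}:_C fg\bigr)$ equals the annihilator of $\bar f\otimes\bar g$ in $\bigl(I^{i_0-1}/I^{i_0}\bigr)\otimes_K\bigl(J^{j_0-1}/J^{j_0}\bigr)$, namely $\Ann_A(\bar f)\,C+\Ann_B(\bar g)\,C=P+Q$. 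Hence $C\,\overline{fg}\cong C/(P+Q)$ embeds in $C/(I+J)^N$, so that $\mathcal{P}\in\Min_C(C/(P+Q))\subseteq\Ass_C(C/(P+Q))=\Ass_C(C\,\overline{fg})\subseteq\Ass_C(C/(I+J)^N)\subseteq\mathcal{A}_C(I+J)$.

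The hard part is this $\supseteq$ direction: the filtration used for the upper bound only lodges the candidate primes in sub-quotients of $C/(I+J)^n$, which does not make them associated; the exact annihilator computation, resting on the identity $I^{i_0-1}J^{j_0-1}\cap(I+J)^{i_0+j_0-1}=I^{i_0}J^{j_0-1}+I^{i_0-1}J^{j_0}$, is what upgrades matters to an honest copy of $C/(P+Q)$ inside $C/(I+J)^{i_0+j_0-1}$ and thereby catches \emph{all} of $\Min_C(C/(P+Q))$. Verifying that identity, and --- for the upper bound --- the absence of embedded primes in $(A/P)\otimes_K(B/Q)$, are the steps demanding the most care; both come down to the coordinate-subspace description of $C$ that the disjointness of the variables makes available.
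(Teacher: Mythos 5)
Your proposal is correct, but it reaches Theorem \ref{thm:SwanWalkC} by a genuinely different route. For the upper bound, the paper first turns primary decompositions of all powers of $I$ and $J$ into filtrations (Lemma \ref{lem:monotone}), proves the intersection identity $(I+J)^n=\bigcap_{k,\ell}\bigl(\sum_i p_{ik}q_{n-i,\ell}\bigr)$ (Lemmas \ref{lem:main001} and \ref{lem:main002}), and bounds the associated primes of each component via \cite[Proposition~3.3, Theorem~2.5]{HNTTBinomial} (Lemma \ref{lem:main003}); you instead filter $C/(I+J)^n$ directly by the ideals $\mathfrak{a}_k=I^k+(I+J)^n$, whose successive quotients are $(I^k/I^{k+1})\otimes_K(B/J^{n-k})$, and feed these into the same $\Ass$-of-a-$K$-tensor-product formula. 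Your route is shorter and bypasses the primary-decomposition machinery entirely, though that machinery is not wasted in the paper: it is precisely what Theorem \ref{thm:SwanWalkA} and Corollary \ref{thm:SwanWalkD} require. For the lower bound, the paper produces $f\in I^{i-1}$ and $g\in J^{j-1}$ with $I^i:f=P$ and $J^j:g=Q$ via Lemma \ref{lem:colon2} (which rests on Yao's mix-and-match theorem \cite{yao2002primary}) and then traps $(I+J)^{i+j-1}:fg$ between $P+Q$ and $(I^i+J^j):fg=P+Q$ using Lemma \ref{lem:colon}; you obtain $f$ and $g$ more elementarily from the short exact sequence $0\to I^{i-1}/I^{i}\to A/I^{i}\to A/I^{i-1}\to 0$ together with minimality of $i$, and you compute $(I+J)^{i+j-1}:fg=P+Q$ via the identity $I^{i-1}J^{j-1}\cap(I+J)^{i+j-1}=I^{i}J^{j-1}+I^{i-1}J^{j}$, which exhibits $C\,\overline{fg}\cong C/(P+Q)$ inside $C/(I+J)^{i+j-1}$; the conclusions agree (the paper's Lemma \ref{lem:mainpowers} additionally shows $\mathcal{P}$ is associated to no lower power, which Corollary \ref{thm:SwanWalkB} needs but Theorem \ref{thm:SwanWalkC} does not). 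Two small points of care: the phrase ``every ideal in sight becomes a coordinate subspace of $C=\bigoplus_{i',j'}A_{i'}\otimes_K B_{j'}$'' should be restricted to the finitely many powers actually involved---adapt bases to the finite chains $A\supseteq I\supseteq\cdots\supseteq I^{n}$ and $B\supseteq J\supseteq\cdots\supseteq J^{n}$, as in the proof of Lemma \ref{lem:idealoperatorID}---since for an infinite descending chain a direct sum of complements need not exhaust the ring; and your Cohen--Macaulay argument that $(A/P)\otimes_K(B/Q)$ has no embedded primes is in effect a re-proof of part of \cite[Theorem~2.5]{HNTTBinomial}, which the paper simply cites, so either treatment is acceptable.
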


We record two applications of Theorem \ref{thm:SwanWalkC}. First, we record the following theorem, our primary result in the paper -- we construct primary decompositions for powers of $I + J$
from filtered primary components for powers of $I$ and $J$ 
in the case where $K$ is algebraically closed.

\begin{theorem}[Cf. Corollary  \ref{thm:SwanWalkD}]\label{thm:SwanWalkA}
Let $K$ be an algebraically closed field and suppose that for each pair $1 \le k \le r$ and $1 \le \ell \le s$, the collections 
$\{p_{mk}\}_{m=0}^\infty$ and $\{q_{m\ell}\}_{m=0}^\infty$ from Definition \ref{def:powersprimarydecomp} are filtrations.
Then for each positive integer $n$, 
$$
(I+J)^n =
\bigcap_{k = 1}^r
\bigcap_{\ell = 1}^s
\left(
\sum_{i=0}^n
p_{ik} \cdot q_{n-i, \ell} 
\right),
$$
where each $\sum_{i=0}^n
p_{ik} \cdot q_{n-i, \ell}$
is either $C$ or primary to the prime ideal $P_k + Q_\ell$.
Furthermore,
$$
\mathcal{A}_C(I+J) =  \left\{P_k + Q_\ell \colon 1 \le k \le r , 1 \le \ell \le s \right\},$$ 
and so the cardinalities of these finite sets satisfy the relation $$\#\mathcal{A}_C(I+J) = \#\mathcal{A}_A(I) \cdot \#\mathcal{A}_B(J) .$$ 
\end{theorem}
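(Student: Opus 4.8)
The plan is to combine the multinomial expansion of $(I+J)^n$ with the sum-filtration machinery of H\`{a}--Nguyen--Trung--Trung \cite{HNTTBinomial} to obtain the displayed intersection formula, and then to extract $\mathcal{A}_C(I+J)$ from the ``moreover'' clause of Theorem~\ref{thm:SwanWalkC}, the decisive point being that over an algebraically closed field each sum $P_k+Q_\ell$ is prime. It is cleanest to establish the intersection formula first and deduce the statement about associated primes from it together with Theorem~\ref{thm:SwanWalkC}; this packaging is what lies behind the reference to Corollary~\ref{thm:SwanWalkD}.

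\emph{The intersection formula.} First I would record $(I+J)^n=\sum_{i=0}^n I^i J^{n-i}$ (with $I^0=J^0=C$), valid for arbitrary ideals. Since $I$ and $J$ are expanded from ideals in disjoint variables, for any $\mathfrak{a}\subseteq A$ and $\mathfrak{b}\subseteq B$ one has $\mathfrak{a}C\cap\mathfrak{b}C=(\mathfrak{a}C)(\mathfrak{b}C)$ -- choose a $K$-basis of $A$ containing a $K$-basis of $\mathfrak{a}$, similarly for $B$ and $\mathfrak{b}$, and compare $K$-spans -- while flatness of $C$ over $A$ and over $B$ lets ideal expansion commute with finite intersections. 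Applying these to $I^i=\bigcap_k p_{ik}$ and $J^{n-i}=\bigcap_\ell q_{n-i,\ell}$ gives $I^i J^{n-i}=\bigcap_{k,\ell}p_{ik}q_{n-i,\ell}$, so that
$$
(I+J)^n=\sum_{i=0}^n\Bigl(\bigcap_{k,\ell}p_{ik}q_{n-i,\ell}\Bigr)\subseteq\bigcap_{k,\ell}\Bigl(\sum_{i=0}^n p_{ik}q_{n-i,\ell}\Bigr).
$$
The reverse inclusion is the one place where the filtration hypothesis enters: it is \cite[Proposition~3.3]{HNTTBinomial} applied to the filtrations $\{I^i\}_{i\ge0}$, $\{J^j\}_{j\ge0}$ and the compatible primary-component families $\{p_{ik}\}_{i\ge0}$, $\{q_{j\ell}\}_{j\ge0}$; as noted in the Remark following Definition~\ref{def:SwanWalkfiltration}, that proof never uses the hypothesis that $L_1$ be nonzero and proper. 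This yields $(I+J)^n=\bigcap_{k,\ell}\bigl(\sum_{i=0}^n p_{ik}q_{n-i,\ell}\bigr)$.

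\emph{Each component is $C$ or $(P_k+Q_\ell)$-primary.} Fix $k,\ell$ and put $S=\sum_{i=0}^n p_{ik}q_{n-i,\ell}$, whose $i=0$ and $i=n$ summands are $q_{n\ell}$ and $p_{nk}$. If $S\ne C$ then $p_{nk}\ne C\ne q_{n\ell}$, hence $P_k+Q_\ell=\sqrt{p_{nk}}+\sqrt{q_{n\ell}}\subseteq\sqrt{p_{nk}+q_{n\ell}}\subseteq\sqrt{S}$; moreover for each $i$ at least one of $p_{ik},q_{n-i,\ell}$ is then proper (otherwise that summand, hence $S$, equals $C$), so each summand lies in $P_k+Q_\ell$ and $S\subseteq P_k+Q_\ell$. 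Thus $\sqrt{S}=P_k+Q_\ell$. Because $K$ is algebraically closed and $A/P_k$, $B/Q_\ell$ are finitely generated domains over $K$, the ring $C/(P_k+Q_\ell)\cong(A/P_k)\otimes_K(B/Q_\ell)$ is a domain, so $P_k+Q_\ell$ is prime. To upgrade the radical computation to primarity of $S$ I would invoke the primary-component half of \cite[Theorem~3.4]{HNTTBinomial} (again in the mildly generalized form), which over an algebraically closed field asserts precisely that each $\sum_{i+j=n}p_{ik}q_{j\ell}$ is $(P_k+Q_\ell)$-primary or equals $C$.

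\emph{Associated primes, the count, and the main obstacle.} By Theorem~\ref{thm:SwanWalkC} one has $\mathcal{A}_C(I+J)=\bigcup\{\mathcal{P}\in\Min(C/(P+Q)):P\in\mathcal{A}(I),\,Q\in\mathcal{A}(J)\}$; since every $P_k+Q_\ell$ is prime, $\Min(C/(P_k+Q_\ell))=\{P_k+Q_\ell\}$, whence $\mathcal{A}_C(I+J)=\{P_k+Q_\ell:1\le k\le r,\ 1\le\ell\le s\}$. The map $(k,\ell)\mapsto P_k+Q_\ell$ is injective, because $(P_k+Q_\ell)\cap A=P_k$ and $(P_k+Q_\ell)\cap B=Q_\ell$ -- the composite $A\to C\to C/(P_k+Q_\ell)$ is $A\twoheadrightarrow A/P_k\hookrightarrow(A/P_k)\otimes_K(B/Q_\ell)$, the last arrow injective as $B/Q_\ell$ is $K$-free, and symmetrically for $B$ -- and the $P_k$ (resp.\ $Q_\ell$) are pairwise distinct, so $\#\mathcal{A}_C(I+J)=rs=\#\mathcal{A}_A(I)\cdot\#\mathcal{A}_B(J)$. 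I expect the main obstacle to be the primarity of $S$: being squeezed as $p_{nk}+q_{n\ell}\subseteq S\subseteq P_k+Q_\ell$ does not by itself exclude embedded associated primes, and ruling them out needs both the full combinatorial shape of the convolution and the hypothesis that $K$ be algebraically closed (equivalently, that associated primes of tensor products over $K$ behave as expected) -- the very input that fails over a general field -- so that step, following \cite[Theorem~3.4]{HNTTBinomial}, is the one to carry out with care.
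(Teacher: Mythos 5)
Your overall architecture matches the paper's (intersection formula via the filtration hypothesis, primarity of each convolution component, then the associated-prime statement from Theorem~\ref{thm:SwanWalkC}), and your contraction argument for injectivity of $(k,\ell)\mapsto P_k+Q_\ell$ is fine. But there is a genuine gap at the reverse inclusion $\bigcap_{k,\ell}\bigl(\sum_{i=0}^n p_{ik}q_{n-i,\ell}\bigr)\subseteq (I+J)^n$, which you attribute to \cite[Proposition~3.3]{HNTTBinomial}. That proposition is only the subquotient isomorphism quoted in the paper as Display~\eqref{eqn:decomp}, namely $\bigl(\sum_{i+j=n}I_iJ_j\bigr)/\bigl(\sum_{i+j=n+1}I_iJ_j\bigr)\cong\bigoplus_i I_i/I_{i+1}\otimes_K J_{n-i}/J_{n-i+1}$; it says nothing about intersecting the convolution sums over $k$ and $\ell$, and no intersection identity of the required kind appears in \cite{HNTTBinomial}. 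What is actually needed is the distributivity statement that for filtrations one may pull a finite intersection inside the convolution, $\bigcap_k\bigl(\sum_i I_{ik}J_{n-i}\bigr)=\sum_i\bigl(\bigcap_k I_{ik}\bigr)J_{n-i}$, applied twice (once in the $A$-direction, once in the $B$-direction) to collapse $\bigcap_{k,\ell}\sum_i p_{ik}q_{n-i,\ell}$ down to $\sum_i I^iJ^{n-i}=(I+J)^n$. This is the paper's Lemma~\ref{lem:main001} (used through Lemma~\ref{lem:main002}), proved by a delicate double induction using the disjoint-variables identity $(I+J_1)\cap J_2=IJ_2+J_1\cap J_2$ of Lemma~\ref{lem:idealoperatorID}; it is the technical heart of the decomposition and it is exactly where the filtration hypothesis earns its keep. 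Your proposal supplies no argument for it, so the displayed primary decomposition is not established.

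A secondary, lesser imprecision: there is no quotable ``primary-component half of \cite[Theorem~3.4]{HNTTBinomial}'' asserting that each $\sum_{i+j=n}p_{ik}q_{j\ell}$ is $(P_k+Q_\ell)$-primary or $C$; that theorem is the symbolic-power binomial formula. What the paper does (Lemma~\ref{lem:main003}) is adapt its \emph{proof}: filter $C/\bigl(\sum_{i+j=n}p_{ik}q_{j\ell}\bigr)$ by the diagonal sums, identify the subquotients via Display~\eqref{eqn:decomp} (this again needs the filtration hypothesis on the $p_{ik}$ and $q_{j\ell}$), and control their associated primes by Display~\eqref{eqn:SwanWalkKtensor-assoc-primes}, concluding that every associated prime of the component lies in $\Min\bigl(C/(P_k+Q_\ell)\bigr)=\{P_k+Q_\ell\}$ when $K$ is algebraically closed. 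You correctly flag this as the step to carry out with care and point at the right sources, so it is a matter of execution rather than a wrong idea; but together with the missing distributivity lemma, the proposal as written does not yet prove the theorem.
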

\noindent Lemma~\ref{lem:monotone} below indicates how to easily construct filtered collections -- such as $\{p_{mk}\}_{m=0}^\infty$ and $\{q_{m\ell}\}_{m=0}^\infty$ in Theorem \ref{thm:SwanWalkA} -- from arbitrary primary decompositions for powers of $I$, $J$, and $I+J$.

Prior to obtaining Theorem \ref{thm:SwanWalkC}, we  originally pursued Theorem \ref{thm:SwanWalkA} with a view towards attacking the following question on \textit{persistence} of associated primes. 

\begin{question}\label{ques:SwanWalk001}
Suppose that 
$\Ass(A/I^{n-1}) \subseteq \Ass(A/I^n)$ and  $\Ass(B/J^{n-1}) \subseteq \Ass(B/J^n)$ for all integers $n>0$. 
When is it the case that  
$\Ass(C/(I+J)^{n-1}) \subseteq \Ass(C/(I+J)^n)$ for all $n$ as well?
\end{question}
\noindent This persistence property seems to hold for prime ideals in polynomial rings that we have been able to find in the literature and study using Macaulay2 \cite{M2}. We know of no prime ideal that fails to satisfy persistence. That said, it seems unlikely that this persistence of associated primes holds for all prime ideals in any polynomial ring over an  arbitrary ground field.

As a second application of Theorem \ref{thm:SwanWalkC}, we  answer Question \ref{ques:SwanWalk001} affirmatively when one of $I$ or $J$ is a nonzero \textbf{normal ideal}, i.e., when all the powers of $I$ or of $J$ are integrally closed. Said ideal satisfies the persistence property by a result of Katz and Ratliff~\cite[(1.3) Theorem]{KR},  and even if the other ideal does not, the corollary to follow indicates that their sum does satisfy it, indicating that the persistence property is \textit{remarkably} persistent and robust under extension of scalars. 

\begin{corollary}\label{thm:SwanWalkB}
\textit{
Fix a field $K$, along with polynomial rings $A = K[x_1, \ldots, x_a]$,  
$B = K[y_1, \ldots, y_b]$
and $C = A \otimes_K B = K[x_1, \ldots , x_a, y_1, \ldots, y_b]$.
Let $I$ be an ideal in $A$ and $J$ a non-zero normal ideal in $B$.
Then 
$\Ass(C/(I+J)^{n-1}) \subseteq \Ass(C/(I+J)^n)$ for all positive integers $n$.
}
\end{corollary}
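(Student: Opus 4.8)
The plan is to deduce Corollary \ref{thm:SwanWalkB} from Theorem \ref{thm:SwanWalkC} by combining the containment statement with a count of associated primes. First I would fix $n > 0$ and let $\pp \in \Ass(C/(I+J)^{n-1})$; the goal is $\pp \in \Ass(C/(I+J)^n)$. By Theorem \ref{thm:SwanWalkC} applied at level $n-1$, there exist $P \in \bigcup_{i=1}^{n-1}\Ass(C/I^i)$ and $Q \in \bigcup_{j=1}^{n-1}\Ass(C/J^j)$ with $\pp \in \Min(C/(P+Q))$. Since $J$ is a non-zero normal ideal, the result of Katz--Ratliff gives $\Ass(B/J^{j-1}) \subseteq \Ass(B/J^j)$ for all $j$, so $\bigcup_{j=1}^{n-1}\Ass(C/J^j) = \Ass(C/J^{n-1})$; in particular $Q \in \Ass(C/J^{n-1}) \subseteq \Ass(C/J^n)$. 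The subtlety is that $P$ need only lie in $\Ass(C/I^i)$ for some $i \le n-1$, not necessarily in $\Ass(C/I^{n-1})$ or $\Ass(C/I^n)$.

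To handle $P$, I would invoke the finer structure in the proof of Theorem \ref{thm:SwanWalkC}, which produces $\pp$ as a minimal prime of $P + Q$ where $P$ and $Q$ are the $A$- and $B$-components arising from a decomposition $(I+J)^{n-1} = \bigcap (\sum p_{ik} q_{n-1-i,\ell})$-type expression; the index $i$ with $P = P_k$ comes paired with the index $n-1-i$ giving $Q = Q_\ell$, and the associated prime $\pp = P_k + Q_\ell$ genuinely occurs only when the summand $\sum_{t} p_{tk} q_{n-1-t,\ell}$ is proper. The key point is that this summand being proper for the exponent $n-1$ forces the analogous summand for exponent $n$ to be proper as well, because passing from $n-1$ to $n$ only enlarges the range of the sum (adds the term $p_{nk}q_{0\ell} = p_{nk}$, or shifts), and $p_{tk} q_{n-1-t,\ell} \subseteq p_{tk} q_{n-t,\ell}$ never makes a proper ideal improper. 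Thus $P_k + Q_\ell$ reappears as an associated prime of $(I+J)^n$. Cleanly, the way to package this is: by Theorem \ref{thm:SwanWalkC} (the displayed equality), $\mathcal{A}_C(I+J) = \{\mathcal{P} \in \Min(C/(P+Q)) : P \in \mathcal{A}(I), Q \in \mathcal{A}(J)\}$, and for a normal $J$ each "stage" of the filtration stabilizes monotonically, so it suffices to check that the first exponent $n$ for which a given $\mathcal{P} = P_k + Q_\ell$ enters $\Ass(C/(I+J)^n)$ is an exponent beyond which it never leaves.

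Concretely, I would argue as follows. Choose, via Lemma \ref{lem:monotone}, filtered primary decompositions of the powers of $I$ and $J$, so that Theorem \ref{thm:SwanWalkA} applies when $K$ is algebraically closed — and reduce to this case by faithfully flat base change $C \to \overline{K} \otimes_K C$, under which associated primes contract correctly and normality of $J$ is preserved (powers of a normal ideal remain integrally closed after smooth base change, as $\overline{K}/K$ is a filtered colimit of smooth, in fact ind-étale, extensions). Over $\overline{K}$, Theorem \ref{thm:SwanWalkA} gives $(I+J)^n = \bigcap_{k,\ell}\bigl(\sum_{i=0}^n p_{ik} q_{n-i,\ell}\bigr)$ with each factor either $C$ or $(P_k+Q_\ell)$-primary, and $P_k + Q_\ell \in \Ass(C/(I+J)^n)$ precisely when the $(k,\ell)$ factor is proper and not redundant. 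Monotonicity of the filtrations gives $\sum_{i=0}^{n-1} p_{ik}q_{n-1-i,\ell} \subseteq \sum_{i=0}^{n} p_{ik}q_{n-i,\ell}$ after reindexing $i \mapsto i$ and using $q_{n-1-i,\ell}\supseteq q_{n-i,\ell}$, $p_{(n)k}\subseteq C = p_{0k}$-absorbed terms; once the $(k,\ell)$ factor is proper it stays proper, and since $\Ass(C/(I+J)^{n-1})\subseteq\{P_k+Q_\ell\}$ always, persistence of each individual prime across stages yields $\Ass(C/(I+J)^{n-1})\subseteq\Ass(C/(I+J)^n)$. The main obstacle I anticipate is the bookkeeping in the previous sentence: verifying that a proper primary component for exponent $n-1$ implies a proper (hence genuinely associated, not redundant) component for exponent $n$, i.e. that the minimal primes $P_k+Q_\ell$ of the intersection do not get "absorbed" when $n$ increases. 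This should follow because $\height(P_k+Q_\ell)$ is constant and the intersection over $(k,\ell)$ has a fixed set of minimal primes once all relevant components are proper — but making this rigorous in the non-equidimensional case, where $P_k+Q_\ell$ might be embedded for some $n$ and minimal for others, will require care, likely by tracking the exact exponent thresholds $i_0(k)$, $j_0(\ell)$ at which $p_{ik}$, $q_{j\ell}$ become proper and showing the relevant sum is proper for all $n \ge i_0(k)+j_0(\ell)$.
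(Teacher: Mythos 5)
There is a genuine gap, and it is exactly at the step you flag as your ``main obstacle.'' After passing to filtered decompositions $(I+J)^n=\bigcap_{k,\ell}\bigl(\sum_{i=0}^{n}p_{ik}q_{n-i,\ell}\bigr)$, your argument only shows that the $(k,\ell)$ component stays \emph{proper} as $n$ grows (which is true, by monotonicity), and from this you want to conclude that $P_k+Q_\ell$ stays \emph{associated}. Properness does not give this: an embedded component can be proper yet redundant, in which case the prime is not associated. The paper's Example~\ref{exassnnot} exhibits precisely this failure: there the component corresponding to $P_2+Q_2$ is proper for every $n$, and $P_2+Q_2$ is associated to $I+J$, yet it is \emph{not} associated to $(I+J)^n$ for any $n\ge 2$. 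Since that example has a non-normal $J$, any correct proof must use normality in the redundancy step itself; your argument invokes normality only through Katz--Ratliff (to push $Q$ up to $\Ass(C/J^{n-1})$), so it cannot close the gap. Your proposed repair via constancy of $\operatorname{ht}(P_k+Q_\ell)$, ``a fixed set of minimal primes,'' or properness thresholds $i_0(k)+j_0(\ell)$ does not help either: the problematic primes are embedded, and in Example~\ref{exassnnot} the threshold is met for all $n\ge 1$ while the prime still drops out of $\Ass$ for $n\ge 2$.

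The paper closes the gap not by reasoning about which components are redundant but by constructing an explicit witness element. With $i,j$ the least exponents such that $P_k\in\Ass(A/I^i)$ and $Q_\ell\in\Ass(B/J^j)$, Lemma~\ref{lem:mainpowers} gives $i+j-1\le n-1$; Katz--Ratliff (normality, first use) gives that $Q_\ell$ is associated to the relevant higher power of $J$; Lemma~\ref{lem:colon2} gives $f\in I^{i-1}$ with $I^i:f=P_k$; and Lemma~\ref{lem:colon3} --- whose hypothesis that the lower powers of $J$ are integrally closed is the second, essential use of normality --- gives an element $g$ in one lower power of $J$ whose colon from the appropriate power of $J$ equals $Q_\ell$. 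Then the colon computation of Lemma~\ref{lem:colon}, as at the start of the proof of Lemma~\ref{lem:mainpowers}, yields $(I+J)^n:fg=P_k+Q_\ell$, and coloning further by an element $c$ with $(P_k+Q_\ell):c=\mathcal{P}$ shows $\mathcal{P}\in\Ass(C/(I+J)^n)$. Nothing in your proposal plays the role of this element construction. A secondary issue: your reduction to $\overline{K}$ is unnecessary (Theorem~\ref{thm:SwanWalkC} and the colon lemmas require no algebraic closedness), and its justification is shaky, since $\overline{K}/K$ is not ind-\'etale when $K$ is imperfect, so preservation of normality of $J$ under that base change would itself need an argument.
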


\begin{remark}\label{rem:SwanWalkHNTT}  Theorems \ref{thm:SwanWalkC} and \ref{thm:SwanWalkA} and Corollary \ref{thm:SwanWalkB} extend in two directions. Our proofs extend to the setting considered by H\`{a}--Nguyen--Trung--Trung~\cite{HNTTBinomial}. Namely, they fix Noetherian commutative algebras $A$ and $B$ over a common field $K$, such that $C = A \otimes_K B$ is Noetherian as well, along with non-zero ideals $I \subseteq A$ and $J \subseteq B$. Moreover, both results  can be rendered for finite tensor products and tensor-multinomial sums of ideals, and deduced via inductive arguments; see the second author's paper \cite[Proof of Multinomial Theorem 2.8]{Walker003} which is instructive in this vein. \end{remark}

\noindent \textbf{Acknowledgements:} We thank Karen E. Smith for encouraging our collaboration and for editing several drafts to improve exposition. The second author acknowledges support from NSF RTG grant DMS-0943832, a 2017-18 Ford Foundation Dissertation Fellowship, and a 2018-19 Rackham Science Award from the Rackham Graduate School at UM-Ann Arbor. Several instructive computations were performed using Macaulay2 \cite{M2}, as reflected by the examples recorded in Section \ref{sect:examples}.

\section{Preparatory Lemmas }

We continue to work with the polynomial $K$-algebras  $A$, $B$, and $C$ as in the Introduction. Of the lemmas  recorded here, the ones cited in proofs in Section \ref{sect:main} are Lemmas \ref{lem:monotone},  \ref{lem:main002}, and \ref{lem:colon} -- \ref{lem:colon3}.  

\begin{lemma}\label{lem:monotone} Consider the ideal collection $\{p_{mk}\}_{m=0}^\infty$ from Definition~\ref{def:powersprimarydecomp} for a fixed index $1 \le k \le r$. If we set $p'_{mk} = \cap_{i = 0}^m p_{ik}$, then $\{p'_{mk}\}_{m=0}^\infty$ is a filtration such that $p'_{mk}$ is either $C$ or $P_k$-primary for each $m \ge 0$, and $I^m = \cap_k p'_{mk}$ is a primary decomposition with possible redundancies. \end{lemma}

\begin{proof}
Each $p'_{mk}$ is a finite intersection of ideals that are either $C$ or $P_k$-primary, and hence $p'_{mk}$ is either $C$ of $P_k$-primary.
Since $I^i \supseteq I^m$ for all $i \le m$, it follows that any $P_k$-primary component of $I^i$ contains $I^m$. Thus $I^m \subseteq \cap_k \cap_{i=0}^m p_{ik} = \cap_k p'_{mk} \subseteq \cap_k p_{mk} = I^m$, so equality holds throughout.
\end{proof}

\begin{lemma}\label{lem:idealoperatorID} For any ideals $I \subseteq A$ and $J_1, J_2 \subseteq B$, $$(I + J_1) \cap J_2 = IJ_2 + J_1 \cap J_2.$$ Similarly, for any ideals $I_1, I_2 \subseteq A$ and $J \subseteq B$, $I_1 \cap (I_2 + J) = I_1 \cap I_2 + I_1 J$. In particular, $I \cap J = I J$. \end{lemma}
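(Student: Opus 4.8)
The plan is to prove the first identity $(I+J_1)\cap J_2 = IJ_2 + J_1\cap J_2$ and then obtain the other two statements formally. The second identity $I_1\cap(I_2+J) = I_1\cap I_2 + I_1 J$ is precisely the first identity with the roles of the two tensor factors $A$ and $B$ interchanged (apply it in $C = B\otimes_K A$ with the ``$A$-ideal'' taken to be $J$ and the ``$B$-ideals'' taken to be $I_2$ and $I_1$), and $I\cap J = IJ$ is the special case $J_1 = (0)$, since then $I+J_1 = I$ and $J_1\cap J_2 = (0)$. For the first identity the inclusion $\supseteq$ is immediate: $IJ_2\subseteq I\subseteq I+J_1$ and $IJ_2\subseteq J_2$ give $IJ_2\subseteq (I+J_1)\cap J_2$, and likewise $J_1\cap J_2\subseteq J_1\subseteq I+J_1$ gives $J_1\cap J_2\subseteq (I+J_1)\cap J_2$.

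For the reverse inclusion I would exploit that $C = A\otimes_K B$ is a free, hence faithfully flat, module over $A$ (and over $B$). Concretely, for any $K$-basis $\{e_\lambda\}_{\lambda\in\Lambda}$ of $B$ one has $C = \bigoplus_\lambda A\,e_\lambda$, the expansion to $C$ of an ideal $\mathfrak a\subseteq A$ is $\mathfrak a C = \bigoplus_\lambda \mathfrak a\,e_\lambda$, and flatness gives $(\mathfrak a\cap\mathfrak a')C = \mathfrak a C\cap\mathfrak a' C$ for ideals $\mathfrak a,\mathfrak a'$ of $A$ (and symmetrically for $B$), so the paper's abuse of notation — writing $J_1\cap J_2$ both for the intersection in $B$ and for its expansion to $C$ — is harmless. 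The key move is to choose the basis $\{e_\lambda\}$ of $B$ adapted simultaneously to $J_1$ and $J_2$: partition $\Lambda = \Lambda_{12}\sqcup\Lambda_1'\sqcup\Lambda_2'\sqcup\Lambda_0$ so that $\{e_\lambda\}_{\lambda\in\Lambda_{12}}$ is a $K$-basis of $J_1\cap J_2$, $\{e_\lambda\}_{\lambda\in\Lambda_{12}\cup\Lambda_1'}$ a basis of $J_1$, and $\{e_\lambda\}_{\lambda\in\Lambda_{12}\cup\Lambda_2'}$ a basis of $J_2$. Then for $f\in(I+J_1)\cap J_2$, write $f = \sum_\lambda e_\lambda h_\lambda$ uniquely with $h_\lambda\in A$: membership in $J_2 C = \bigoplus_{\lambda\in\Lambda_{12}\cup\Lambda_2'}A\,e_\lambda$ forces $h_\lambda = 0$ unless $\lambda\in\Lambda_{12}\cup\Lambda_2'$, while membership in $IC + J_1C$, which equals $\bigoplus_{\lambda\in\Lambda_{12}\cup\Lambda_1'}A\,e_\lambda\oplus\bigoplus_{\lambda\notin\Lambda_{12}\cup\Lambda_1'}I\,e_\lambda$, forces $h_\lambda\in I$ for $\lambda\in\Lambda_2'$ (such indices lie in neither $\Lambda_{12}$ nor $\Lambda_1'$). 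Splitting $f = \sum_{\lambda\in\Lambda_{12}}e_\lambda h_\lambda + \sum_{\lambda\in\Lambda_2'}e_\lambda h_\lambda$ now exhibits the first sum in $(J_1\cap J_2)C$ and the second in $IJ_2$, so $f\in IJ_2 + J_1\cap J_2$.

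The only genuine obstacle is bookkeeping the interaction of the two ideal expansions inside $C$ and tracking which ``coordinate block'' of $f$ lands in the product ideal $IJ_2$ versus in the intersection $J_1\cap J_2$; the adapted-basis choice above is engineered precisely so that this is transparent. An equivalent, basis-free variant is to prove the underlying pure-linear-algebra identity $(I\otimes_K B + A\otimes_K J_1)\cap(A\otimes_K J_2) = I\otimes_K J_2 + A\otimes_K(J_1\cap J_2)$ inside $A\otimes_K B$ by picking $K$-linear complements $A = I\oplus I'$ and $B = (J_1\cap J_2)\oplus U\oplus V\oplus W$ with $J_1 = (J_1\cap J_2)\oplus U$ and $J_2 = (J_1\cap J_2)\oplus V$, and reading both sides off the induced direct-sum decomposition of the tensor product; in the final write-up I would keep whichever of the two versions turns out shorter.
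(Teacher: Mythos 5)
Your proposal is correct and follows essentially the same route as the paper: both prove the containment $\supseteq$ trivially, obtain the second identity by symmetry and $I\cap J=IJ$ as a degenerate case, and establish the main inclusion by choosing $K$-bases of $B$ adapted to the chain $J_1\cap J_2\subseteq J_1,J_2$ (and a basis of $A$ adapted to $I$) and reading the identity off the induced decomposition of $C=A\otimes_K B$. If anything, your blockwise coordinate argument makes explicit the step the paper compresses into intersecting generating sets of simple tensors, so no changes are needed.
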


\begin{proof}
We only prove the displayed equality
because the second statement follows by symmetry,
and because the last statement follows trivially from it.
We adapt the proof for  \cite[Lemma~3.1]{HNTTBinomial}.
First, some notation: given sets $U$ and $V$ in $A$ and $B$, respectively, their simple tensor set is 
$$U \otimes V =\{u \otimes v \colon u \in U, v \in V\}.$$ 
Let $U$ be a $K$-vector space basis for $I$, and $V$ a $K$-vector space basis for $J_1 \cap J_2$. 
Extend $V$ to a $K$-basis $V_i$ for each $J_i$,
and extend $U$ to a $K$-basis $U^*$ for $A$ and $V_2$ to a $K$-basis $V_2^*$ for $B$.
Then $U^*\otimes V_2^*$ is a $K$-basis for $A \otimes_K B$. 
Notice $(I + J_1) \cap J_2$ is generated by 
$$\bigl(U \otimes  V_2^*) \cup (U^* \otimes V_1) \bigr)
\cap (U^* \otimes V_2) = (U \otimes V_2) \cup \bigl( U^* \otimes (V_1 \cap V_2) \bigr)$$
and the right-hand side generates $I J_2 + (J_1 \cap J_2)$. 
\end{proof}

\begin{lemma}\label{lem:main001}
For any index $k \in \Z_{> 0}$, let $\mathcal{I}_k = \{I_{ik}\}_{i=0}^\infty$ consist of ideals in $A$
and $\mathcal{J} = \{J_{i}\}_{i=1}^\infty$ consist of ideals in $B$ with $J_0 \supseteq J_1 \supseteq J_2 \supseteq \cdots$.
Then for any pair of integers $r \ge 1$ and $n \ge 0$, 
$$
\bigcap_{k=1}^r \left(\sum_{i=0}^n I_{ik} J_{n-i}\right) =
\sum_{i=0}^n \left(\bigcap_{k=1}^r \widetilde I_{ik}\right) J_{n-i},
$$ where $\widetilde I_{ik} = \sum_{j = i}^n I_{jk}$.
When $\mathcal{I}_k$ is a filtration in $C$ for a given index $k$,
then in fact $\widetilde I_{ik} = I_{ik}$.

An analogous identity holds when the roles of $A$ and $B$ are switched. 
\end{lemma}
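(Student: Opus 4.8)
The plan is to prove the identity by induction on $r$, the number of terms in the intersection, treating the ``collapsing'' of the index set $\{i,i+1,\dots,n\}$ into the single tail-sum $\widetilde I_{ik}$ as the real content. The base case $r=1$ is tautological once we observe that $\sum_{i=0}^n I_{i1}J_{n-i}=\sum_{i=0}^n\bigl(\sum_{j=i}^n I_{j1}\bigr)J_{n-i}$: indeed, expanding the right side, the term $I_{j1}$ is multiplied by $\sum_{i=0}^j J_{n-i}=J_{n-j}$ since $\mathcal J$ is a decreasing chain, so $J_{n-i}\supseteq J_{n-j}$ for $i\le j$ and the sum of the $J$'s telescopes to the largest one. (If each $\mathcal I_k$ is already a filtration this step is unnecessary, which is exactly the parenthetical remark in the statement.) So without loss of generality we may assume each $\widetilde I_{ik}=I_{ik}$, i.e. each $\mathcal I_k$ is a filtration, and it suffices to prove
$$
\bigcap_{k=1}^r\Bigl(\sum_{i=0}^n I_{ik}J_{n-i}\Bigr)=\sum_{i=0}^n\Bigl(\bigcap_{k=1}^r I_{ik}\Bigr)J_{n-i}
$$
under the standing hypothesis that each $\{I_{ik}\}_i$ and $\{J_i\}_i$ is descending.

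For the inductive step, write the left side as $\bigl(\sum_{i=0}^n I_{ir}J_{n-i}\bigr)\cap\bigl(\sum_{i=0}^n I'_{i}J_{n-i}\bigr)$ where $I'_i:=\bigcap_{k=1}^{r-1}I_{ik}$ and we have used the inductive hypothesis to rewrite the intersection of the first $r-1$ factors. The key move is then a distributivity/modular-law computation for intersections of sums of the form $\sum_i M_i J_{n-i}$ with $M_i\subseteq A$ descending: one shows directly that
$$
\Bigl(\sum_{i=0}^n M_i J_{n-i}\Bigr)\cap\Bigl(\sum_{i=0}^n N_i J_{n-i}\Bigr)=\sum_{i=0}^n(M_i\cap N_i)J_{n-i}.
$$
The inclusion $\supseteq$ is immediate. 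For $\subseteq$, I would argue at the level of $K$-bases exactly as in the proof of Lemma \ref{lem:idealoperatorID}: pick compatible $K$-bases of $A$ adapted to the chain $M_0\supseteq M_1\supseteq\cdots$ and simultaneously to $N_0\supseteq N_1\supseteq\cdots$ (possible since these are two descending chains of subspaces, so one can choose a basis $\{e_\alpha\}$ of $A$ together with index sets $S_0\supseteq S_1\supseteq\cdots$ and $T_0\supseteq T_1\supseteq\cdots$ with $M_i=\mathrm{span}\{e_\alpha:\alpha\in S_i\}$, $N_i=\mathrm{span}\{e_\alpha:\alpha\in T_i\}$), and a $K$-basis of $B$ adapted to the chain $J_0\supseteq J_1\supseteq\cdots$; then $\sum_i M_iJ_{n-i}$, $\sum_i N_iJ_{n-i}$ and $\sum_i(M_i\cap N_i)J_{n-i}$ are each spanned by explicit subsets of the product basis $\{e_\alpha\otimes f_\beta\}$ of $C=A\otimes_K B$, and the set-theoretic identity of these spanning sets is a routine check on index sets. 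Applying this with $M_i=I_{ir}$, $N_i=I'_i=\bigcap_{k<r}I_{ik}$ gives $\sum_{i=0}^n(I_{ir}\cap I'_i)J_{n-i}=\sum_{i=0}^n\bigl(\bigcap_{k=1}^r I_{ik}\bigr)J_{n-i}$, completing the induction. The statement with the roles of $A$ and $B$ switched follows by the same argument, or by symmetry of the construction.

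The main obstacle is the two-chain basis-adaptation in the displayed intersection identity: unlike in Lemma \ref{lem:idealoperatorID}, where only one nontrivial subspace and one chain had to be handled, here we must choose a single $K$-basis of $A$ simultaneously refining the two descending filtrations $\{M_i\}$ and $\{N_i\}$, so that $M_i\cap N_i$ is again a coordinate subspace and the intersection of the two big sums can be read off on basis elements. This is possible because a descending chain of subspaces of a vector space admits a ``staircase'' basis and two such chains can be simultaneously refined (e.g. by choosing a basis adapted to all the subspaces $M_i$, $N_i$, $M_i\cap N_i$, $M_i+N_i$ at once); but it is the step that requires care, and it is where the hypothesis that the $\mathcal I_k$ and $\mathcal J$ are \emph{chains} (not arbitrary ideal families) is genuinely used. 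Everything else — the telescoping reduction to filtrations and the induction on $r$ — is formal.
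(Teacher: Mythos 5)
Your argument is correct in substance but takes a genuinely different route from the paper's for the core step. The reduction to descending chains via the tail sums $\widetilde I_{ik}$ and the induction on $r$ coincide with the paper's opening moves; but where the paper then runs a second induction on $n$, repeatedly combining Lemma~\ref{lem:idealoperatorID} with the modular law $(L_1+L_2)\cap L_3=L_1+(L_2\cap L_3)$ for $L_1\subseteq L_3$ (so that it never needs more linear algebra than the one-subspace basis argument of Lemma~\ref{lem:idealoperatorID}), you prove the two-term identity $\bigl(\sum_{i} M_iJ_{n-i}\bigr)\cap\bigl(\sum_{i} N_iJ_{n-i}\bigr)=\sum_{i}(M_i\cap N_i)J_{n-i}$ in one stroke, with no induction on $n$, by choosing a $K$-basis of $A$ adapted simultaneously to the two descending chains $\{M_i\}$, $\{N_i\}$ and a basis of $B$ adapted to $\{J_i\}$. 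The resulting index-set check is indeed routine: if $(\alpha,\beta)$ lies in the $i$-th piece of the first sum and the $i'$-th piece of the second with, say, $i\le i'$, then descent of the chains puts it in the $i$-th piece of the right-hand side. Your route is arguably more transparent about where the chain hypotheses enter, at the price of a stronger linear-algebra input than the paper ever uses; you should also record the (easy) fact that the ideal product $M_iJ_{n-i}$ in $C$ coincides with the $K$-subspace $M_i\otimes_K J_{n-i}$ of $A\otimes_K B$, since that is what converts the whole identity into linear algebra over $K$.

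The one point where your write-up is not yet a proof is the simultaneous adapted basis itself: saying one can choose a basis "adapted to all the subspaces $M_i$, $N_i$, $M_i\cap N_i$, $M_i+N_i$ at once" restates the claim rather than proving it. The claim is true, and is exactly the statement that two finite filtrations of a vector space (of arbitrary dimension) admit a common splitting: for each pair $(i,j)$ choose a complement $W_{ij}$ of $(M_{i+1}\cap N_j)+(M_i\cap N_{j+1})$ inside $M_i\cap N_j$, and verify, using the modular law, that $M_p\cap N_q=\bigoplus_{i\ge p,\, j\ge q}W_{ij}$; a basis assembled from bases of the $W_{ij}$ is then adapted to both chains. (Equivalently, the sublattice of the modular lattice of subspaces generated by two chains is distributive, hence coordinatizable.) Note this genuinely uses that only two chains in $A$ are in play -- for three or more families of subspaces a common adapted basis can fail to exist -- so your induction on $r$, which only ever intersects two such sums at a time, is what keeps the argument within reach. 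With that lemma supplied or cited, your proof goes through.
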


\begin{proof}
We induce on $r$ and then on $n$. The case $r=1$ is trivial. Observe that for all $k$,
$$
\sum_{i=0}^n I_{ik} J_{n-i}
= \sum_{i=0}^n \widetilde I_{ik} J_{n-i}.
$$
Replacing the $I_{ik}$ with $\tilde I_{ik}$ for all $k$, 
it suffices to prove the lemma assuming the
$\mathcal{I}_k$ are filtrations.

By induction it suffices to prove the case $r = 2$.
Several times in the proof we will use Lemma~\ref{lem:idealoperatorID}. Since $I_{0 k} = C$ for all $k$, the claim holds for $n = 0$ as $
I_{01}J_0 \cap I_{02}J_0  = J_0
= (I_{01} \cap I_{02}) J_0$. 
Now assume that $n > 0$, assuming the identity for $r=2$ and $n-1$.
The first three equalities below, along with the sixth, 
use the easy fact that for any ideals $L_1, L_2, L_3$ in a ring $R$,
if $L_1 \subseteq L_3$
then $(L_1 + L_2) \cap L_3 = L_1 + (L_2 \cap L_3)$; if $n \ge 2$, the fifth holds by applying Lemma \ref{lem:idealoperatorID} to the first boxed intersection and applying the induction hypothesis to the latter, first replacing $I_{ik}$ with $I'_{ik} := I_{i+1,k}$ for $1 \le i \le n-1$. Set $\mathcal{S}_{k, t, u} := \sum_{i=u}^{t} I_{ik} J_{t-i}$ for pairs $t \ge 1$ and $u \ge 1$. Note that $\mathcal{S}_{k, t, u} = C$ if $t <u $ and otherwise lies in $I_{vk}$ for triples $t \ge u \ge v \ge 0$.  The left-hand intersection is 
\begin{align*}
&
\left(J_n + \mathcal{S}_{1,n,1} \right)
\cap \left(J_n + \mathcal{S}_{2,n,1} \right)  = J_n + ((J_n + \mathcal{S}_{1,n,1}) \cap \mathcal{S}_{2,n,1}) \quad \quad \quad  [L_1 = J_n \subseteq L_3 = J_n + \mathcal{S}_{1,n,1}]\\\
&=
J_n + 
\biggl(\biggl(J_n + (I_{11} \cap \left(J_{n-1} + \mathcal{S}_{1, n, 2}) \right)\biggr)  \cap \left(I_{12} \cap (J_{n-1} + \mathcal{S}_{2, n, 2} ) \right) \biggr) \quad [L_1  = \mathcal{S}_{k,n,1} \subseteq L_3 = I_{1 k}, \mbox{ } k=1,2] \\
&=
J_n + 
\biggl((J_n + I_{11}) \cap \left(J_{n-1} + \mathcal{S}_{1, n, 2} \right)  \cap \left(I_{12} \cap (J_{n-1} + \mathcal{S}_{2, n, 2} ) \right) \biggr) \quad [L_1 = J_n \subseteq L_3 = J_{n-1} + \mathcal{S}_{1, n, 2}] \\
&=
J_n + \biggl(
{(J_n + I_{11}) \cap I_{12}}
\cap {\left(J_{n-1} + \mathcal{S}_{1, n, 2}\right)
\cap \left(  J_{n-1} + \mathcal{S}_{2, n, 2} \right)} \biggr)  \quad   [\mbox{by reordering}]. \\
\end{align*}
If $n=1$, we are now done with Lemma \ref{lem:idealoperatorID},
otherwise we continue with equalities:
\begin{align*}
&= J_n + \biggl( 
(J_n I_{12} + I_{11} \cap I_{12})
\cap \left(J_{n-1} + \sum_{i=2}^n (I_{i1} \cap I_{i2})J_{n-i} \right) \biggr) 
\\
&= J_n + \biggl( 
((J_n I_{12} + I_{11} \cap I_{12})
\cap J_{n-1}) + \sum_{i=2}^n (I_{i1} \cap I_{i2})J_{n-i} \biggr) \quad  [L_1 = \sum_{i=2}^n (I_{i1} \cap I_{i2})J_{n-i} \subseteq L_3 = I_{11} \cap I_{12}]
\\
&= J_n + \left( 
J_n I_{12} + \left( \sum_{i=1}^n (I_{i1} \cap I_{i2})J_{n-i}\right) \right) \quad \quad \quad  [L_1 = J_n I_{12} \subseteq L_3 = J_{n-1} ],\end{align*}
which certainly equals the desired right-hand sum. 
The lemma then follows in full.
\end{proof}

\begin{remark} The same argument proves that if the $J_i$ form a chain of ideals,
then
$$
\bigcap_{k=1}^r \left(\sum_{i=0}^n I_{ik} J_{n-i}\right) =
\sum_{i=0}^n \left(\bigcap_{k=1}^r \widehat I_{ik}\right) J_{n-i},
$$
where $\widehat I_{ik}$ is the sum of those $I_{jk}$
for which $J_{n-i} \subseteq J_{n-j}$. Indeed, this follows 
from the identity 
$I_{ik} J_{n-i} + I_{jk} J_{n-j} =
I_{ik} J_{n-i}
+ I_{jk} J_{n-i} 
+ I_{jk} J_{n-j}$.
\end{remark}

\begin{lemma}\label{lem:main002}
Suppose the ideal collections $\{p_{nk}\}_{n=0}^\infty$ and $\{q_{n\ell}\}_{n=0}^\infty$ from Definition~\ref{def:powersprimarydecomp}
are filtrations for each fixed pair $1 \le k \le r$ and $1 \le \ell \le s$.
Then for each positive integer $n$, 
$$
(I+J)^n =
\bigcap_{\ell = 1}^s
\bigcap_{k = 1}^r
\left(
\sum_{i=0}^n
p_{ik} \cdot q_{n-i,\ell} 
\right).
$$
\end{lemma}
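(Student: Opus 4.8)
\textbf{Proof plan for Lemma~\ref{lem:main002}.}
The plan is to reduce the claimed intersection identity for $(I+J)^n$ to a two-step process: first expand $(I+J)^n$ into a sum of products of powers of $I$ and $J$, then substitute the primary decompositions $I^m = \bigcap_k p_{mk}$ and $J^m = \bigcap_\ell q_{m\ell}$ and commute the sums past the intersections using Lemma~\ref{lem:main001}. Concretely, since $I \subseteq A$ and $J \subseteq B$ involve disjoint variables, the binomial expansion gives $(I+J)^n = \sum_{i=0}^n I^i J^{n-i}$ (here one uses that $I^i J^{n-i} = I^i \cap J^{n-i}$ by the last clause of Lemma~\ref{lem:idealoperatorID}, and that the products $I^i J^{n-i}$ for $0 \le i \le n$ are mutually comparable enough that lower-order terms are absorbed; strictly, $(I+J)^n$ is generated by products of $n$ generators drawn from $I$ and $J$, and grouping by how many come from $I$ yields exactly $\sum_{i=0}^n I^iJ^{n-i}$). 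I would state this expansion cleanly as the first step, citing the disjoint-variables hypothesis.

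Next I would substitute. Writing $I^i = \bigcap_{k=1}^r p_{ik}$ and $J^{n-i} = \bigcap_{\ell=1}^s q_{n-i,\ell}$, and using that each $p_{ik}$ is an ideal of $A$ while each $q_{n-i,\ell}$ is an ideal of $B$, the product distributes over these finite intersections: $I^i J^{n-i} = \left(\bigcap_k p_{ik}\right)\left(\bigcap_\ell q_{n-i,\ell}\right) = \bigcap_k \bigcap_\ell p_{ik} q_{n-i,\ell}$. The fact that products of tensor-factor ideals commute with intersections of ideals drawn from the complementary factor is precisely the content packaged into Lemma~\ref{lem:idealoperatorID} (applied repeatedly), so this step is essentially bookkeeping. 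Summing over $i$ then gives
\[
(I+J)^n = \sum_{i=0}^n \bigcap_{k=1}^r \bigcap_{\ell=1}^s p_{ik}\, q_{n-i,\ell}.
\]

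The crux is then to interchange the outer sum over $i$ with the two intersections over $k$ and $\ell$, turning this into $\bigcap_\ell \bigcap_k \sum_i p_{ik} q_{n-i,\ell}$. This is exactly what Lemma~\ref{lem:main001} delivers: fixing $\ell$, apply Lemma~\ref{lem:main001} with the role of "$A$-side ideals $I_{ik}$" played by $p_{ik}$, the chain "$J_i$" played by the filtration $\{q_{i\ell}\}_i$ (which is decreasing by hypothesis), and $r$ the same; since $\{p_{ik}\}_i$ is a filtration for each $k$, the lemma's conclusion holds with $\widetilde I_{ik} = I_{ik}$, giving $\bigcap_{k=1}^r \sum_{i=0}^n p_{ik} q_{n-i,\ell} = \sum_{i=0}^n \left(\bigcap_{k=1}^r p_{ik}\right) q_{n-i,\ell}$. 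Rearranging this identity, together with the obvious fact that $\bigcap_\ell$ of the left-hand sides over $\ell$ recovers $\sum_i \bigcap_{k,\ell} p_{ik}q_{n-i,\ell}$ after one more application of Lemma~\ref{lem:main001} (now with $A$ and $B$ swapped, to pull the $\bigcap_\ell$ inside the sum), yields the claimed formula. The main obstacle is not any single hard inequality but making sure the two applications of Lemma~\ref{lem:main001} — one on each tensor factor — are chained in the correct order and that the filtration hypotheses on both $\{p_{ik}\}$ and $\{q_{i\ell}\}$ are invoked where needed so that all the $\widetilde I_{ik}$-type correction terms collapse; I would write this out carefully rather than gesture at it, since it is the step where an index error would be easy to commit.
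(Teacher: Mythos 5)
Your proposal is correct and takes essentially the same route as the paper's proof: write $(I+J)^n = \sum_{i=0}^n I^i J^{n-i}$, substitute the primary decompositions $I^i = \bigcap_k p_{ik}$ and $J^{n-i} = \bigcap_\ell q_{n-i,\ell}$, and interchange the sum with the two intersections by applying Lemma~\ref{lem:main001} once on each tensor factor, the filtration hypotheses guaranteeing $\widetilde I_{ik} = I_{ik}$ (the paper just runs this same computation in the opposite direction, starting from the intersection). Two minor remarks: the identity $(I+J)^n=\sum_{i=0}^n I^iJ^{n-i}$ holds for arbitrary ideals in any commutative ring and needs no disjoint-variable or $I\cap J=IJ$ argument, and your intermediate step rewriting $(I+J)^n$ as $\sum_i \bigcap_{k,\ell} p_{ik}q_{n-i,\ell}$ is harmless but superfluous.
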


\proof
We invoke Lemma \ref{lem:main001} twice:
\begin{align*}
\bigcap_{\ell = 1}^s \bigcap_{k = 1}^r
\left( \sum_{i=0}^n p_{ik} q_{n-i,\ell} \right)
=
\bigcap_\ell \left(
\sum_{i=0}^n \bigl(\bigcap_k p_{ik}\bigr) q_{n-i,\ell} \right) 
=
\bigcap_\ell \left(
\sum_{i=0}^n I^i q_{n-i,\ell} \right) 
&=
\sum_{i=0}^n I^i \bigl(\bigcap_\ell q_{n-i,\ell}\bigr) \\
& = \sum_{i=0}^n I^i J^{n-i} = (I+J)^n. & \qed \\
\end{align*}

\begin{remark} If $P_k$ is not associated to $I, I^2, \ldots, I^n$, then no components involving the $p_{ik}$ are needed in the decomposition of $(I+J)^n$ in the lemma above. However, if $P_k$ is associated to some $I^i$ for $i < n$, then $p_{1k}, \ldots, p_{nk}$ may or may not be needed, as shown in examples in Section~\ref{sect:examples}.  \end{remark}

\begin{lemma}\label{lem:colon}
Let $f$ be a non-zero divisor in $A$. 
Let $I$ be an ideal in $A$ and $J$ an ideal in $B$.
Then $(I+J) : f = (I : f) + J$.
\end{lemma}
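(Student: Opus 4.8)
The plan is to prove the two containments separately, with the nontrivial direction being $(I+J):f \subseteq (I:f) + J$. The key structural fact is that $C = A\otimes_K B$ is a free $B$-module with a basis consisting of monomials in the $x_i$, and that $f \in A$ acts on this free module in a way that is "block-diagonal" with respect to the $B$-grading. Concretely, I would fix a $K$-basis $U$ of $A$ and use $\{u\otimes 1 : u\in U\}$ as a free $B$-module basis of $C$; equivalently, I will just write elements of $C$ uniquely as $K$-linear combinations $\sum_u u\,g_u$ with $u\in U$ and $g_u\in B$, or group terms so that an element of $C$ is written uniquely as $\sum_v a_v\, v$ where $v$ runs over a $K$-basis of $B$ and $a_v\in A$. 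This second description is the convenient one: since $f\in A$, multiplication by $f$ sends $\sum_v a_v v$ to $\sum_v (f a_v) v$, i.e. it acts coordinatewise on the $B$-basis coordinates, which live in $A$.

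The easy containment $(I:f)+J \subseteq (I+J):f$ holds because $f\cdot(I:f)\subseteq I\subseteq I+J$ and $f\cdot J \subseteq J \subseteq I+J$, using that $f\in A\subseteq C$ and everything is expanded to $C$. For the reverse containment, take $h\in C$ with $fh\in I+J$. Using Lemma~\ref{lem:idealoperatorID} (the identity $I\cap J = IJ$, and more to the point the description of $I+J$ as a sum of simple-tensor-generated pieces) or directly the basis description, I would write $h = h_I + h_J + h_0$ where, after choosing a $K$-basis of $B$ of the form $V^* = V \cup V'$ with $V$ a basis of $J$, we split the $v$-coordinates: let $h_J$ collect the terms of $h$ whose $B$-basis index lies in $V$ (so automatically $h_J\in J$), and write the remaining part as $\sum_{v\in V'} a_v v$. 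Then $fh \in I+J$ means $f h - (\text{something in } J) \in I + J$, and projecting onto the span of $V'$-coordinates (a $B$-linear, hence $A$-linear projection $C \to C$ that kills $J$ and kills nothing of $I$... wait, this needs care, see below) one gets $\sum_{v\in V'}(f a_v) v \in I$. Since $I$ is extended from $A$, $I = \bigoplus_{v\in V^*} I\cdot v$ as an $A$-submodule in these coordinates — that is, membership in $I$ can be checked coordinatewise — so $f a_v \in I$ for each $v\in V'$, i.e. $a_v \in I:f$ in $A$, whence $\sum_{v\in V'} a_v v \in I$ (expanded to $C$), and therefore $h \in (I:f)+J$.

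The step I expect to be the main obstacle — and the one requiring the most care — is making the projection argument rigorous: I need that for an ideal $I\subseteq A$ expanded to $C$, and for a $K$-basis decomposition $B = \bigoplus_{v\in V^*} Kv$, an element $\sum_v a_v v$ (with $a_v\in A$) lies in $IC$ if and only if every $a_v\in I$. This is exactly the statement that $C/IC \cong (A/I)\otimes_K B$ is a free $A/I$-module on the basis $\{v\}$, i.e. flatness/freeness of $B$ over $K$, which is automatic since $K$ is a field; I'd phrase it via the flat base change $-\otimes_A(A/I)$ applied to the free $A$-module $C = A\otimes_K B$. Once that coordinatewise criterion for membership in $IC$ is in hand, the rest is the bookkeeping above: multiplication by $f\in A$ is coordinatewise in $A$, so $fh\in I+J$ forces (after subtracting the manifestly-in-$J$ part) each relevant $A$-coordinate $fa_v$ to lie in $I$, hence each $a_v\in I:f$, which is what we want. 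I would also remark that the hypothesis "$f$ a non-zero divisor in $A$" is actually not needed for this argument — the identity $(I+J):f = (I:f)+J$ holds for any $f\in A$ — but since the paper only invokes it for such $f$ I'd keep the statement as is, or add a parenthetical remark.
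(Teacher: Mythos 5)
Your proposal is correct, but it takes a genuinely different route from the paper's. The paper deduces the lemma in two lines from Lemma~\ref{lem:idealoperatorID}: for $c \in (I+J):f$ one has $cf \in (I+J)\cap (f) = I\cap (f) + J(f) = (I:f)f + Jf$, and then cancels $f$ --- and that cancellation is precisely where the non-zero-divisor hypothesis is used. You instead argue directly with the free $A$-module decomposition $C=\bigoplus_{v\in V^*} Av$ coming from a $K$-basis $V^*=V\cup V'$ of $B$ adapted to $J$: multiplication by $f\in A$ acts coordinatewise, the $V$-part of $h$ lies in $JC$, and membership in $IC$ is detected coordinatewise because $C/IC\cong (A/I)\otimes_K B$ is free over $A/I$ (flat base change), which is exactly the crux you identify. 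Two small points to tighten: the projection onto the $V'$-coordinates is $\mathrm{id}_A\otimes_K\pi_B$ for a $K$-linear projection $\pi_B$ of $B$ killing $J$, so it is $A$-linear but \emph{not} $B$-linear (your parenthetical has this backwards, though $A$-linearity is all you need since $f\in A$); and in addition to the coordinatewise criterion you should record that $\pi(IC)\subseteq IC$, which is immediate from $IC=I\otimes_K B$, so that $\pi$ applied to $fh\in IC+JC$ lands in $IC$. What your route buys: it shows, correctly, that the non-zero-divisor hypothesis is superfluous --- the identity $(I+J):f=(I:f)+J$ holds for every $f\in A$, and the same flatness argument works in the more general setting of Remark~\ref{rem:SwanWalkHNTT}, where $A$ may have zero divisors. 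What the paper's route buys is brevity: it reuses Lemma~\ref{lem:idealoperatorID}, whose proof already carried out the basis bookkeeping once and for all, at the mild cost of the extra hypothesis on $f$.
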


\begin{proof}
Let $c \in (I + J) : f$.
Then by Lemma \ref{lem:idealoperatorID},
$$
cf \in (I + J) \cap (f) = I \cap (f) + J (f)
= (I : f) f + J (f),
$$
so that $c \in (I : f) + J$.
The other inclusion is easy.
\end{proof}

\begin{lemma}\label{lem:colon2}
Let $L_1 \subseteq L_2$ be proper ideals in a Noetherian ring $R$
and let $P$ be a prime ideal associated to $L_1$ but not to $L_2$.
Then $(L_1 : P) \cap L_2$ properly contains $L_1$
and there exists $f \in (L_1 : P) \cap L_2$
such that $L_1 : f = P$.
\end{lemma}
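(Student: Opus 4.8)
The plan is to recognize $P$ as an associated prime of the subquotient $L_2/L_1$ and then translate that membership directly into the element $f$ we want.

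First I would write down the short exact sequence of $R$-modules
\[
0 \longrightarrow L_2/L_1 \longrightarrow R/L_1 \longrightarrow R/L_2 \longrightarrow 0,
\]
which is available precisely because $L_1 \subseteq L_2$. Invoking the standard containment $\Ass_R(R/L_1) \subseteq \Ass_R(L_2/L_1) \cup \Ass_R(R/L_2)$ for associated primes along a short exact sequence, together with the hypotheses $P \in \Ass_R(R/L_1)$ and $P \notin \Ass_R(R/L_2)$, I conclude $P \in \Ass_R(L_2/L_1)$. (If desired, I would include the one-line justification of the containment: given $P \in \Ass(M)$ pick $m$ with $Rm \cong R/P \hookrightarrow M$; then either $Rm \cap (L_2/L_1) \neq 0$, and every nonzero submodule of $R/P$ has $P$ as its only associated prime, so $P \in \Ass(L_2/L_1)$, or $Rm$ injects into the quotient $R/L_2$, so $P \in \Ass(R/L_2)$.)

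Next I would unwind this membership. By definition of associated prime there is $\bar f \in L_2/L_1$ with $\Ann_R(\bar f) = P$; choosing a representative $f \in L_2$, the equality $\Ann_R(\bar f) = P$ is exactly the statement $L_1 : f = P$. Since $\bar f \neq 0$ we get $f \notin L_1$, and since $Pf = (L_1 : f)f \subseteq L_1$ we get $f \in (L_1 : P)$. Hence $f \in \bigl((L_1 : P) \cap L_2\bigr) \setminus L_1$, which simultaneously exhibits the desired $f$ and, together with the obvious inclusion $L_1 \subseteq (L_1 : P) \cap L_2$, shows that $(L_1 : P) \cap L_2$ properly contains $L_1$.

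There is no real obstacle in this argument; the only genuine choice is selecting the correct short exact sequence, and the insight is that the hypothesis "$P$ is not associated to $L_2$" is precisely what is needed to force $P$ out of $\Ass(R/L_2)$ and hence into $\Ass(L_2/L_1)$. Everything after that is a matter of unwinding definitions.
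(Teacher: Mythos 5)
Your proof is correct, but it takes a genuinely different route from the paper's. You extract $P \in \Ass_R(L_2/L_1)$ from the short exact sequence $0 \to L_2/L_1 \to R/L_1 \to R/L_2 \to 0$ together with the standard containment $\Ass(R/L_1) \subseteq \Ass(L_2/L_1) \cup \Ass(R/L_2)$, and then a single element $\bar f$ with annihilator $P$ delivers both conclusions simultaneously: $f \in \bigl((L_1 : P) \cap L_2\bigr) \setminus L_1$ and $L_1 : f = P$. The paper instead works at the level of primary decompositions: it assumes $(L_1 : P) \cap L_2 = L_1$, replaces a $P$-primary component $p$ of $L_1$ by $p : P$, then $p : P^2$, and so on, invoking Yao's mix-and-match theorem \cite{yao2002primary}, and derives a contradiction since $p : P^n = R$ for large $n$; it then needs a second step (intersecting further with the ideal $L'$ of the components whose radicals properly contain $P$) to upgrade the proper containment to an element $f$ with $L_1 : f$ equal to $P$ exactly, rather than merely containing it. Your argument is shorter and more elementary, avoiding primary decomposition machinery altogether and not requiring the auxiliary ideal $L'$, since the annihilator of $\bar f$ is $P$ on the nose; what the paper's heavier colon-and-replace technique buys is a template that it reuses almost verbatim in Lemma~\ref{lem:colon3}, where the integral closure hypotheses enter through repeated coloning by $L$ and the quick module-theoretic shortcut is not as directly available.
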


\begin{proof}
Certainly $L_1$ is contained in $(L_1 : P) \cap L_2$.
Suppose that $L_1 = (L_1 : P) \cap L_2$.
Let $p$ be a $P$-primary component of $L_1$.
Then $p : P$ is the only $P$-primary component on the right-hand side
of the ideal equality,
and so by the mix-and-match theorem of primary decompositions
due to Yao~\cite{yao2002primary}, 
$p : P$ is also a primary component of $L_1$
on the left-hand side.
But then $p : P^n$ is a primary component of $L_1$ for all non-negative
integers $n$,
but this is a contradiction as for large $n$,
$p : P^n = R$.
Thus $(L_1 : P) \cap L_2$ properly contains $L_1$.

Let $L'$ be the intersection of all primary components of $L_1$
whose radicals properly contain $L_1$.
Then $P$ is not associated to $L_2 \cap L'$
and $L_1 \subseteq L_2 \cap L'$.
Then by the previous paragraph
there exists $f \in (L_1 : P) \cap L_2 \cap L'$ such that $f \not \in L_1$.
Then $P \subseteq L_1 : f$,
and the latter is a proper ideal whose associated primes
are all associated to $L_1$
and none properly contain~$P$.
Thus $P = L_1 : f$.
\end{proof}

\begin{lemma}\label{lem:colon3}
Let $L$ be an ideal in a Noetherian ring $R$
containing a non-zerodivisor.
Let $n$ be a positive integer
and suppose that $L, L^2, \ldots, L^{n-1}$ are integrally closed.
Suppose that $P$ is associated to $L^n$.
Then there exists $f \in L^{n-1}$
such that $L^n : f = P$.
\end{lemma}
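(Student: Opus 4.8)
The plan is to restate the conclusion homologically and then localize. For $f\in L^{n-1}$ the ideal $L^n:f$ is the annihilator of the image of $f$ in $L^{n-1}/L^n$, so the assertion ``there is $f\in L^{n-1}$ with $L^n:f=P$'' is exactly the statement $P\in\Ass_R(L^{n-1}/L^n)$. Since formation of associated primes commutes with localization and $(L^{n-1}/L^n)_P=L^{n-1}R_P/L^nR_P$, this holds if and only if $PR_P\in\Ass_{R_P}(L^{n-1}R_P/L^nR_P)$. Hence I would replace $R$ by $R_P$ and assume from now on that $(R,P)$ is local with maximal ideal $P$: the hypotheses survive, since integral closure of ideals commutes with localization and the prescribed nonzerodivisor of $L$ avoids every associated prime of $R$, hence remains a nonzerodivisor in $R_P$. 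In this local setting the goal is equivalent to: there exists $f\in L^{n-1}\setminus L^n$ with $fP\subseteq L^n$ (for such an $f$ the ideal $L^n:f$ is proper and contains the maximal ideal $P$, so $L^n:f=P$).

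Next I would let $m$ be the least positive integer with $P\in\Ass(R/L^m)$, so $1\le m\le n$. Applying $\Ass(-)$ to the short exact sequence $0\to L^{n-1}/L^n\to R/L^n\to R/L^{n-1}\to 0$ gives $\Ass(R/L^n)\subseteq\Ass(L^{n-1}/L^n)\cup\Ass(R/L^{n-1})$; so if $P\notin\Ass(R/L^{n-1})$ --- in particular if $m=n$ --- then $P\in\Ass(L^{n-1}/L^n)$ and we are done. I would therefore assume $m\le n-1$. The same short exact sequence with $m$ in place of $n$, together with $P\notin\Ass(R/L^{m-1})$ (minimality of $m$), gives $P\in\Ass(L^{m-1}/L^m)$, so there is an element $g\in L^{m-1}\setminus L^m$ with $L^m:g=P$ --- existence of such a $g$ is immediate since $R$ is local, and is also exactly what Lemma~\ref{lem:colon2} provides when applied to $L^m\subseteq L^{m-1}$.

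The heart of the argument is then to promote $g$ from $L^{m-1}$ up to $L^{n-1}$ while retaining control of the colon. I claim $gL^{n-m}\not\subseteq L^n$. Suppose instead $gL^{n-m}\subseteq L^n=L^m\cdot L^{n-m}$. Because $n-m\ge 1$, a power of the nonzerodivisor of $L$ lies in $L^{n-m}$, so $M:=L^{n-m}$ is a finitely generated faithful $R$-module; applying the Cayley--Hamilton/determinantal trick to multiplication by $g$ on $M$ produces an equation of integral dependence of $g$ over $L^m$, i.e.\ $g\in\overline{L^m}$. But $m\le n-1$, so $\overline{L^m}=L^m$ by hypothesis, contradicting $g\notin L^m$; this proves the claim. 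Now choose $h\in L^{n-m}$ with $gh\notin L^n$ and set $f:=gh$. Then $f\in L^{m-1}L^{n-m}=L^{n-1}$, and $fP=h(gP)\subseteq hL^m\subseteq L^{n-m}L^m=L^n$ while $f\notin L^n$, so $L^n:f=P$, as required. The main obstacle is precisely making the colon come out equal to $P$ rather than merely containing it, which is why localizing at $P$ at the outset is essential; the only other delicate point is the faithfulness of $L^{n-m}$ needed to run the determinantal trick, and that is exactly where the hypothesis that $L$ contains a nonzerodivisor gets used.
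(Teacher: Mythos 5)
Your proof is correct, but it takes a genuinely different route from the paper's. The paper argues globally and by contradiction: it sets $q_n$ equal to the intersection of the primary components of $L^n$ whose radicals properly contain $P$, supposes $(L^n:P)\cap L^{n-1}\cap q_n=L^n$, colons this equality repeatedly by $L$ --- using the Pr\"ufer determinantal trick together with integral closedness of $L,\dots,L^{n-1}$ to identify $L^k:L$ with $L^{k-1}$ at each step --- and, after descending to the first power, reaches a contradiction via Yao's mix-and-match theorem on primary decompositions; any $f\in\bigl((L^n:P)\cap L^{n-1}\cap q_n\bigr)\setminus L^n$ then satisfies $L^n:f=P$, membership in $q_n$ being what rules out associated primes strictly larger than $P$. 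You instead reformulate the claim as $P\in\Ass(L^{n-1}/L^n)$ and localize at $P$, which makes $P$ maximal and so removes at the outset the need for $q_n$ and for Yao's theorem (a proper colon containing $P$ is automatically $P$); you then take the least $m$ with $P\in\Ass(R/L^m)$, extract a witness $g$ with $L^m:g=P$ from the standard exact-sequence argument, and apply the determinantal trick once, to multiplication by $g$ on the faithful module $L^{n-m}$, to ascend: if $gL^{n-m}\subseteq L^n$ then $g\in\overline{L^m}=L^m$, a contradiction, so $f=gh$ works for some $h\in L^{n-m}$. Both proofs turn on the same two hypotheses (the nonzerodivisor, via faithfulness, and integral closedness, via the determinantal trick), but your argument is more elementary --- it needs neither Yao's theorem nor Lemma~\ref{lem:colon2} --- and it uses integral closedness of only the single power $L^m$, whereas the paper's descent uses it for all of $L,\dots,L^{n-1}$. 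If you write this up, state explicitly the standard fact that associated primes localize, $\Ass_{R_P}(M_P)=\{QR_P: Q\in\Ass_R(M),\ Q\subseteq P\}$, which is what carries the conclusion back from $R_P$ to $R$; as you note, both hypotheses survive localization, so the reduction is harmless.
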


\begin{proof}
Let $q_n$ be the intersection of primary components of $L^n$
whose radicals properly contain $P$.
Certainly $L^n \subseteq (L^n : P) \cap L^{n-1} \cap q_n$.
Suppose that equality holds.
By Lemma~\ref{lem:colon2},
$P$ is associated to $L^{n-1} \cap q_n$, and hence to $L^{n-1}$.
Colon the equality by $L$:
$$
((L^n : L) : P) \cap (L^{n-1} : L) \cap (q_n : L) = L^n : L.
$$
By the determinantal trick due to Pr\"ufer
(see \cite[Corollary 1.1.8]{HunSwan})
for all positive integers $k$,
$L^k : L$ is a subset of the integral closure of $L^{k-1}$.
By assumption this equals $L^{k-1}$ if $k \le n$.
Thus
$$
(L^{n-1} : P) \cap L^{n-2} \cap (q_n : L) = L^{n-1}.
$$
By repeating this step
we get that $P$ is associated to $L^{n-2}, L^{n-3}, \ldots, L$
and that
$$
(L : P) \cap (q_n : L^{n-1}) = (L : P) \cap L^0 \cap (q_n : L^{n-1}) = L.
$$
If the $P$-primary component on the right is $p$,
then the only $P$-primary component on the left is $p : P$,
and so by the mix-and-match theorem of primary decompositions
due to Yao~\cite{yao2002primary},
in a primary decomposition of $L$
we can replace $p$ with $p : P$,
and similarly that with $p : P^2$,
et cetera.
But for large $m$,
$p : P^m = R$,
which says that $P$ is not associated to $L$ after all,
which is a contradiction.
Thus $L^n$ is properly contained in $(L^n : P) \cap L^{n-1} \cap q_n$.
Let $f$ be in the latter ideal and not in $L^n$.
Then $L^n : f$ is a proper ideal which contains $P$
and has no associated primes strictly larger than $P$,
so that $L^n : f = P$.
\end{proof}

\section{Proofs of the Key Results}\label{sect:main}

We continue to work with the polynomial $K$-algebras  $A$, $B$, and $C$ as in the Introduction. Of the lemmas  recorded above, we require Lemmas \ref{lem:monotone}, \ref{lem:main002} and  \ref{lem:colon} --\ref{lem:colon3}  going forward.  

Our proofs rely on H\`{a}--Nguyen--Trung--Trung~\cite[Lemma 2.4, Theorem 2.5, Proposition~3.3, Proof of Theorem~3.4]{HNTTBinomial}. 
The proofs of these results in~\cite{HNTTBinomial}
work for filtrations as defined in  Definition~\ref{def:SwanWalkfiltration}.
By ~\cite[Theorem~2.5]{HNTTBinomial}, given nonzero finitely-generated modules $M$ and $N$ over $A$ and $B$, respectively, 
\begin{equation}\label{eqn:SwanWalkKtensor-assoc-primes}
\Ass_C (M \otimes_K N) = \bigcup  \left\{P \in \operatorname{Min}_C (C /\mathfrak{p} + \mathfrak{q})  \colon \mathfrak{p} \in \Ass_A (M), \mbox{ }  \mathfrak{q} \in \Ass_B(N) \right\},
\end{equation}
in terms of sets of associated primes and minimal associated primes.  By~\cite[Proposition~3.3]{HNTTBinomial},
for any filtrations $\{I_i\}_{i \ge 0}$ and $\{J_j\}_{j \ge 0}$ in $A$ and $B$, respectively, we have for any integer $n \ge 0$ an isomorphism of $C$-modules deduced at the level of $K$-vector spaces: 
\begin{equation}\label{eqn:decomp}
\displaystyle
\frac{\sum_{i+j=n}I_iJ_j}{\sum_{i+j=n+1}I_iJ_j}
\cong \bigoplus_{i=0}^n\big(I_i/I_{i+1} \otimes_K J_{n-i}/J_{n-i+1}\big).
\end{equation}

\begin{lemma}\label{lem:main003}
Suppose that for each pair $1 \le k \le r$ and $1 \le \ell \le s$, the collections 
$\{p_{mk}\}_{m=0}^\infty$ and $\{q_{m\ell}\}_{m=0}^\infty$ from Definition \ref{def:powersprimarydecomp} are filtrations.
Then for each triple of integers $n \ge 1$, $k \in \{1, \ldots, r\}$, $\ell
\in \{1, \ldots, s\}$,
$$
\Ass_C \left(C/ (\sum_{i+j = n} p_{ik} \cdot q_{j \ell} ) \right) \subseteq \Min_C (C/(P_k + Q_\ell)).
$$ In particular, in case $\sum_{i+j = n} p_{ik} \cdot q_{j \ell}$ is proper and $P_k + Q_\ell$ is a prime ideal (e.g., this holds when $K$ is algebraically closed),  $\sum_{i+j = n} p_{ik} \cdot q_{j \ell}$ is primary to $P_k + Q_\ell$. \end{lemma}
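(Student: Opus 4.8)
The plan is to combine the vector-space decomposition~\eqref{eqn:decomp} with the tensor formula~\eqref{eqn:SwanWalkKtensor-assoc-primes} and then pass from "associated primes of the successive quotients of a filtration" to "associated primes of the ideal itself." Concretely, fix $n \ge 1$, $k$, $\ell$, and write $L_n = \sum_{i+j=n} p_{ik} q_{j\ell}$ and more generally $L_m = \sum_{i+j=m} p_{ik} q_{j\ell}$ for all $m \ge 0$; since $\{p_{ik}\}$ and $\{q_{j\ell}\}$ are filtrations, $\{L_m\}_{m\ge 0}$ is a filtration in $C$ (this is exactly the kind of $\sum I_i J_j$ appearing in~\cite[Proposition~3.3]{HNTTBinomial}). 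Then $C/L_n$ embeds (as a $C$-module) into $\bigoplus_{m \ge n} L_m/L_{m+1}$? — that is not literally true, so instead I would argue as follows.

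First I would record that for a filtration $\{L_m\}_{m \ge 0}$ with $L_0 = C$, every associated prime of $C/L_n$ is an associated prime of some successive quotient $L_m/L_{m+1}$ with $0 \le m$; more precisely, $\Ass_C(C/L_n) \subseteq \bigcup_{m=0}^{N} \Ass_C(L_m/L_{m+1})$ for $N$ large enough, because $C/L_n$ has a finite filtration whose factors are among the $L_m/L_{m+1}$ — indeed $L_0/L_n = C/L_n$ is filtered by the images of $L_1, \ldots, L_{n-1}$, with factors $L_m/L_{m+1}$ for $0 \le m \le n-1$, and associated primes only grow under submodule/quotient in a short exact sequence. Actually only the factors $L_m / L_{m+1}$ for $m = 0, \dots, n-1$ appear, so $\Ass_C(C/L_n) \subseteq \bigcup_{m=0}^{n-1}\Ass_C(L_m/L_{m+1})$. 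Then by~\eqref{eqn:decomp} applied to the filtrations $\{p_{ik}\}$ and $\{q_{j\ell}\}$, $L_m/L_{m+1} \cong \bigoplus_{i=0}^{m} p_{ik}/p_{i+1,k} \otimes_K q_{m-i,\ell}/q_{m-i+1,\ell}$, so its associated primes are the union over $i$ of $\Ass_C(p_{ik}/p_{i+1,k} \otimes_K q_{m-i,\ell}/q_{m-i+1,\ell})$.

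Next I would apply~\eqref{eqn:SwanWalkKtensor-assoc-primes} with $M = p_{ik}/p_{i+1,k}$ (an $A$-module) and $N = q_{m-i,\ell}/q_{m-i+1,\ell}$ (a $B$-module), whenever these are nonzero. Because $p_{ik}$ and $p_{i+1,k}$ are each either $C$ (at $i=0$, forcing the quotient, hence this summand, to be irrelevant unless also the other factor survives) or $P_k$-primary, the $A$-module $p_{ik}/p_{i+1,k}$ has $\Ass_A \subseteq \{P_k\}$: it is a submodule of $A/p_{i+1,k}$ when $p_{i+1,k} \ne C$, whose only associated prime is $P_k$; and when $p_{i+1,k}=C$ the quotient is $0$. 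Symmetrically $\Ass_B(q_{m-i,\ell}/q_{m-i+1,\ell}) \subseteq \{Q_\ell\}$. Hence~\eqref{eqn:SwanWalkKtensor-assoc-primes} gives $\Ass_C$ of each nonzero tensor factor contained in $\Min_C(C/(P_k+Q_\ell))$. Assembling the three steps yields $\Ass_C(C/L_n) \subseteq \Min_C(C/(P_k+Q_\ell))$, which is the first assertion. For the "in particular": if $L_n$ is proper and $P_k+Q_\ell$ is prime, then $\Min_C(C/(P_k+Q_\ell)) = \{P_k+Q_\ell\}$, so $L_n$ has the single associated prime $P_k+Q_\ell$ and no embedded primes, i.e.\ $L_n$ is $(P_k+Q_\ell)$-primary; and the parenthetical "$K$ algebraically closed $\Rightarrow P_k+Q_\ell$ prime" follows because $C/(P_k+Q_\ell) \cong A/P_k \otimes_K B/Q_\ell$ is a tensor product of domains over an algebraically closed field, hence a domain.

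The main obstacle is the passage in the first step — controlling $\Ass_C(C/L_n)$ by the successive quotients of the filtration $\{L_m\}$ — and making sure the filtration $\{L_m\}_{m \ge 0}$ to which~\eqref{eqn:decomp} applies is the right one with $L_0 = C$ (which holds since $p_{0k} = q_{0\ell} = C$, so $L_0 \supseteq p_{0k}q_{0\ell} = C$). One must be careful that~\eqref{eqn:decomp} is stated for two filtrations producing $\sum_{i+j=n} I_i J_j$, which is exactly $L_n$ here, so no re-derivation is needed; the only genuinely new bookkeeping is the elementary short-exact-sequence argument $\Ass(C/L_n) \subseteq \bigcup_{m<n}\Ass(L_m/L_{m+1})$, together with checking the degenerate $i=0$ or $j=0$ summands do not introduce spurious primes (they are either zero or again have associated primes among $\{P_k\}$ resp.\ $\{Q_\ell\}$, since e.g.\ $p_{0k}/p_{1k} = C/p_{1k}$ is $P_k$-primary quotient or zero).
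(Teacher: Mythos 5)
Your argument is correct and is essentially the paper's own proof: you bound $\Ass_C(C/L_n)$ by the associated primes of the successive quotients $L_m/L_{m+1}$ (the paper does this via the short exact sequences $0 \to L_{t-1}/L_t \to C/L_t \to C/L_{t-1} \to 0$, which is the same filtration argument), then apply the isomorphism~\eqref{eqn:decomp} and the tensor formula~\eqref{eqn:SwanWalkKtensor-assoc-primes}, using that each nonzero $p_{ik}/p_{i+1,k}$ and $q_{j\ell}/q_{j+1,\ell}$ has only $P_k$, respectively $Q_\ell$, as associated prime. Your handling of the degenerate (zero or $i=0$) summands and of the ``in particular'' clause likewise matches the paper, so there is nothing to add.
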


\begin{proof}
We adapt from the proof given for the Symbolic Power Binomial Theorem \cite[Theorem~3.4]{HNTTBinomial}. Define $L_{t,k,\ell} := \sum_{i+j = t} p_{ik} \cdot q_{j \ell}$ for any integer $ t \ge 1$. 
From the short exact sequences $$0 \rightarrow  L_{t-1,k,\ell}/L_{t,k,\ell}
\rightarrow C/L_{t,k,\ell} \rightarrow C/L_{t-1,k,\ell} \rightarrow 0  \quad \quad  (1 \le t \le n),$$
we may infer that
$$\Ass_C(C/L_{n,k,\ell}) \subseteq \bigcup_{t=1}^n \Ass_C(L_{t-1,k,\ell}/L_{t,k,\ell}).$$
By Display~\eqref{eqn:decomp}, we have 
$$
\Ass_C(L_{t-1,k,\ell}/L_{t,k,\ell})
= \bigcup_{i+j=t-1}\Ass_C\big(p_{i, k} /p_{i + 1, k}  \otimes_K q_{j, \ell}/q_{j+1, \ell}\big).
$$
When the ideal $p_{i, k} /p_{i + 1, k} \subseteq A / p_{i+1, k}$ is non-zero,
its only associated prime ideal is $P_k$,
and similarly
the only associated prime ideal of
$q_{j, \ell}/q_{j+1, \ell}$ is $Q_\ell$.
Thus by Display~\eqref{eqn:SwanWalkKtensor-assoc-primes}  we observe that  
$$\Ass_C(L_{t-1,k,\ell}/L_{t,k,\ell}) \subseteq \Min_C (C/(P_k + Q_\ell)),$$
whence the lemma follows in full. \end{proof}

\begin{lemma}\label{lem:mainpowers}
Let $i$ and $j$ be the least positive integers
such that $P$ is associated to $I^i$ and $Q$ is associated to $J^j$.
Let $\mathcal{P}  \in \Min(C/(P+Q))$.
Then $\mathcal{P}$ is associated to $(I+J)^{i+j-1}$
and to no lower power of $I+J$.
\end{lemma}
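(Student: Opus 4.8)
The plan is to prove two things: (a) $\mathcal{P}$ is associated to $(I+J)^{i+j-1}$, and (b) $\mathcal{P}$ is not associated to $(I+J)^n$ for any $n < i+j-1$. For (b), I would argue by reduction to $A$ and $B$ separately. Suppose $\mathcal{P} \in \Ass(C/(I+J)^n)$ with $n \le i+j-2$. By Theorem \ref{thm:SwanWalkC}, $\mathcal{P} \in \Min(C/(P'+Q'))$ for some $P' \in \Ass(C/I^{i'})$ with $i' \le n$ and $Q' \in \Ass(C/J^{j'})$ with $j' \le n$. Since $\mathcal{P} \supseteq P' + Q' \supseteq P'$, and $\mathcal{P}$ is a minimal prime over $P+Q$, I want to conclude $P' \supseteq P$ forces $P' = P$ after contracting to $A$; more carefully, contracting $\mathcal{P} \supseteq P'$ and $\mathcal{P} \supseteq P$ to $A$ gives $\mathcal{P} \cap A \supseteq P$ and $\mathcal{P} \cap A \supseteq P'$. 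One needs that $\mathcal{P} \cap A = P$ (because $\mathcal{P}$ is minimal over $P+Q$, $P \subseteq A$, $Q \subseteq B$, and $(P+Q)\cap A = P$ by the disjoint-variables structure, so any minimal prime over $P+Q$ contracts to a minimal prime over $P$ in $A$, which is $P$ itself since $P$ is prime). Then $P' \subseteq \mathcal{P} \cap A = P$, but $P$ is associated to no power $I^{i'}$ with $i' < i$ while $P' \in \Ass(C/I^{i'})$ — this does not immediately give a contradiction unless $P' = P$. The cleaner route: from $\mathcal{P}\cap A = P$ and $P' \subseteq \mathcal{P}\cap A = P$, and symmetrically $Q' \subseteq \mathcal{P}\cap B = Q$. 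Now I claim the minimality of $i$ and $j$ combined with a localization/flatness argument at $\mathcal{P}$ forces $i' \ge i$ or $j' \ge j$, contradicting $i', j' \le n \le i+j-2$ — this is the step I will need to think through most carefully, and it likely mirrors the HNTT argument for binomial edge-type bounds.

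For part (a), the natural approach is to exhibit an explicit element $f \in C$ with $(I+J)^{i+j-1} : f = \mathcal{P}$, or at least with the associated primes of $(I+J)^{i+j-1} : f$ all contained in $\mathcal{P}$ and containing $\mathcal{P}$. By minimality, choose $g \in A$ with $I^i : g = P$ (such $g$ exists since $P \in \Ass(A/I^i)$) and $h \in B$ with $J^j : h = Q$. The candidate is $f = gh$, or a product of powers-type element adapted to the filtration; one computes $(I+J)^{i+j-1} : gh$ using Lemma \ref{lem:colon} and Lemma \ref{lem:idealoperatorID} together with the decomposition $(I+J)^{i+j-1} = \sum_{a+b = i+j-1} I^a J^b$. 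The key identity to push through is that, modulo lower-order terms, $gh$ multiplies $(I+J)^{i+j-1}$ into exactly the piece $I^{i-1}J^{j-1}$-neighborhood, and the colon ideal collapses to something with radical $P+Q$; intersecting with the fact that $\mathcal{P}$ is a \emph{minimal} prime of $P+Q$ promotes "radical $P+Q$" to "$\mathcal{P} \in \Ass$". Concretely, I expect to show $P + Q \subseteq \sqrt{(I+J)^{i+j-1} : f}$ and that every associated prime of $(I+J)^{i+j-1}:f$ lies in $\Min(C/(P+Q))$ via Theorem \ref{thm:SwanWalkC} applied to the colon, then localize at $\mathcal{P}$ to isolate it.

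The step I expect to be the main obstacle is the lower-bound half, part (b): ruling out that $\mathcal{P}$ appears as an associated prime of $(I+J)^n$ for $n = i+j-2$ and below. The containment direction of Theorem \ref{thm:SwanWalkC} only tells us $\mathcal{P}$ must come from some $\Ass(C/I^{i'})$ and $\Ass(C/J^{j'})$ with $i', j' \le n$, but a priori a \emph{different} pair $(P', Q')$ with $\mathcal{P} \in \Min(C/(P'+Q'))$ could produce the same $\mathcal{P}$ at a lower total degree. I would close this by showing that if $\mathcal{P} \in \Min(C/(P+Q)) \cap \Min(C/(P'+Q'))$ then $P'+Q'$ and $P+Q$ have the same radical locally at $\mathcal{P}$, forcing (via contraction to $A$ and $B$, using that $\mathcal{P}\cap A = P$, $\mathcal{P}\cap B = Q$) the equalities $\sqrt{P'} = P$ and $\sqrt{Q'} = Q$, hence $P' = P$, $Q' = Q$ since these are prime; then $i' \ge i$ and $j' \ge j$ by minimality, so $n \ge i' \ge i$ and $n \ge j' \ge j$, and in fact one needs the sharper $n \ge i + j - 1$, which should follow from a more careful look at the graded pieces in Display \eqref{eqn:decomp} — namely $p_{i'k}/p_{i'+1,k} \otimes_K q_{j'\ell}/q_{j'+1,\ell}$ is nonzero only when $i' < i$ forces that tensor factor to vanish, pinning down where $\mathcal{P}$ can first appear.
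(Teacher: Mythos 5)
Your outline leaves both halves of the lemma genuinely open. For the existence half, the decisive point you are missing is that the witnesses must be chosen \emph{inside the previous powers}: the paper uses Lemma~\ref{lem:colon2} (with $L_1=I^i\subseteq L_2=I^{i-1}$, legitimate precisely because $i$ is minimal, so $P\notin\Ass(A/I^{i-1})$) to produce $f\in I^{i-1}$ with $I^i:f=P$, and likewise $g\in J^{j-1}$ with $J^j:g=Q$. Only then does $Pfg\subseteq I^ig\subseteq I^iJ^{j-1}\subseteq(I+J)^{i+j-1}$ hold, giving $P+Q\subseteq(I+J)^{i+j-1}:fg\subseteq(I^i+J^j):fg=(I^i:f)+(J^j:g)=P+Q$ by Lemma~\ref{lem:colon}, whence $(I+J)^{i+j-1}:fg=P+Q$ exactly and $\mathcal{P}=(I+J)^{i+j-1}:cfg$ for a suitable $c$. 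With an arbitrary $g$ satisfying $I^i:g=P$ (all you assume), $I^i h$ need not lie in $(I+J)^{i+j-1}$, so the containment $P+Q\subseteq(I+J)^{i+j-1}:gh$ can simply fail, and your fallback does not repair it: Theorem~\ref{thm:SwanWalkC} applied to the colon only confines $\Ass\bigl(C/((I+J)^{i+j-1}:f)\bigr)$ to the union over \emph{all} pairs $(P',Q')$, not to $\Min(C/(P+Q))$, so "radical contains $P+Q$ plus minimality" does not by itself promote $\mathcal{P}$ to an associated prime.

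For the lower-bound half you candidly stop at $n\ge\max(i,j)$, and indeed Theorem~\ref{thm:SwanWalkC} alone can never give the sharp bound $n\ge i+j-1$; you must re-enter the proof of that theorem. The paper's route: if $\mathcal{P}\in\Ass(C/(I+J)^n)$, then by Lemma~\ref{lem:main002} $\mathcal{P}$ is associated to some $L_{n,k,\ell}=\sum_m p_{mk}q_{n-m,\ell}$, by Lemma~\ref{lem:main003} $\mathcal{P}\in\Min(C/(P_k+Q_\ell))$, and by \cite[Lemma~2.4]{HNTTBinomial} (your contraction argument $\mathcal{P}\cap A=P_k$, $\mathcal{P}\cap B=Q_\ell$ is the right idea here) necessarily $P_k=P$, $Q_\ell=Q$; then, since one may take $p_{0k}=\dots=p_{i-1,k}=C=q_{0\ell}=\dots=q_{j-1,\ell}$ by minimality of $i$ and $j$, the component $L_{n,k,\ell}$ contains no $P$-primary contribution (it is in fact all of $C$) unless $n-j+1\ge i$, i.e.\ $n\ge i+j-1$. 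Your gesture toward the graded pieces of Display~\eqref{eqn:decomp} can be made to work (nonvanishing of $p_{mk}/p_{m+1,k}\otimes_K q_{t-1-m,\ell}/q_{t-m,\ell}$ forces $m\ge i-1$ and $t-1-m\ge j-1$, hence $t\ge i+j-1$), but as written this key quantitative step is asserted, not proved, so the proposal does not yet establish the lemma.
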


\begin{proof}
By Lemma~\ref{lem:colon2},
there exist $f \in I^{i-1}$ and $g \in J^{j-1}$ 
such that $P = I^i : f$
and $J^j : g = Q$.
Then
$$
Pfg \subseteq I^i g \subseteq I^i J^{j-1} \subseteq (I+J)^{i+j-1},
$$
and similarly $Qfg \subseteq (I+J)^{i+j-1}$.
In particular,
$$
P+Q \subseteq (I+J)^{i+j-1} : f g
\subseteq \bigl(I^i + J^j\bigr) : f g,
$$
and by Lemma~\ref{lem:colon},
this is a subset of $(I^i : f) + (J^j : g) = P + Q$.
Thus $P+Q = (I+J)^{i+j-1} : f g$.
Since $\mathcal{P}$ is minimal over $P+Q$,
there exists $c \in C$ such that $(P+Q) : c = \mathcal{P}$,
so that $\mathcal{P} = (I+J)^{i+j-1} : c f g$,
which means that $\mathcal{P}$ is associated to $(I+J)^{i+j-1}$.

Now suppose that $\mathcal{P}$ is associated to $(I+J)^n$.
By Lemma~\ref{lem:main002},
$\mathcal{P}$ is associated to some
$L_{n,k,\ell} = \sum_{m=1}^n p_{mk} q_{n-m,\ell}$. 
By Lemma~\ref{lem:main003},
$\mathcal{P}$ is minimal over $P_k + Q_\ell$. 
By Lemma~\cite[Lemma 2.4]{HNTTBinomial} we conclude that
$P = P_k$ and $Q = Q_\ell$.
Since $p_{0k} = p_{1k} = \cdots = p_{i-1,k} = C = q_{0\ell} = q_{1\ell} = \cdots = q_{j-1,\ell}$,
$$
L_{n,k,\ell} = p_{n-j+1,k} + q_{n-i+1,\ell}
+ \sum_{m=i}^{n-j} p_{mk} q_{n-m,\ell},
$$
and for $P_k$-primary component to appear,
$n-j+1 \ge i$,
i.e., $n \ge i + j - 1$.
\end{proof}

At last, we are now set to deduce Theorems  \ref{thm:SwanWalkC} and \ref{thm:SwanWalkA} and Corollary \ref{thm:SwanWalkB} from the Introduction.

\begin{proof}[Proof of Theorem \ref{thm:SwanWalkC}]
We want to show that for each integer $n \ge 1$, $$
\Ass(C/(I+J)^n)
\subseteq
\bigcup \left\{\mathcal{P} \in \Min(C/(P+Q)) :
P \in \bigcup_{i=1}^n \Ass(C/I^i), Q \in \bigcup_{j=1}^n \Ass(C/J^{j})
\right\},
$$
and that 
$$
\mathcal{A}_C(I+J) =
\bigcup \{\mathcal{P} \in \Min(C/(P+Q)) : P \in
\mathcal{A}(I), Q \in \mathcal{A}(J)\}.
$$ By Lemma~\ref{lem:monotone},
for each index pair $k, l$ with $1 \le k \le r$ and $1 \le \ell \le s$
we can make monotone filtrations $\{p_{i,k}\}_{i \ge 0}$
and $\{q_{j,\ell }\}_{j \ge 0}$ behaving as stipulated in  Definition~\ref{def:powersprimarydecomp}.  
Set $L_{n, k , \ell} = \sum_{i=0}^n p_{i, k} q_{n-i, \ell}$.
By Lemma~\ref{lem:main002},
$(I+J)^n$ is the intersection of the $L_{n,k,\ell}$ as $k$ and $\ell$ vary.
Thus $\Ass(C/(I+J)^n)$
is a subset of $\bigcup_{k , \ell}  \Ass (C/L_{n,k,\ell})$.
Then Lemma~\ref{lem:main003} proves the inclusions $\subseteq$
in the two displays. The opposite inclusion in the latter display follows by Lemma \ref{lem:mainpowers}. 
\end{proof}
\begin{remark}\label{rem:SwanWalkstrictinclusion}
Equality may fail in the first display involving $\Ass(C/(I+J)^n)$ -- 
see Example~\ref{exassnnot}.
\end{remark}

\begin{corollary}\label{thm:SwanWalkD}
Let $K$ be an algebraically closed field,
let $A = K[x_1, \ldots, x_a]$,  
$B = K[y_1, \ldots, y_b]$, 
and $C = A \otimes_K B = K[x_1, \ldots , x_a, y_1, \ldots, y_b]$ be polynomial $K$-algebras.
Let $I$ be an ideal in $A$ and $J$ an ideal in $B$, 
and suppose that for each pair $1 \le k \le r$ and $1 \le \ell \le s$, the collections 
$\{p_{mk}\}_{m=0}^\infty$ and $\{q_{m\ell}\}_{m=0}^\infty$ from Definition \ref{def:powersprimarydecomp} are filtrations  
$($perhaps manufactured via Lemma~\ref{lem:monotone} first$)$.
Then for each positive integer $n$, 
$$
(I+J)^n =
\bigcap_{\ell = 1}^s
\bigcap_{k = 1}^r
\left(
\sum_{i=0}^n p_{ik} \cdot q_{n-i,\ell} 
\right)
$$ is a possibly redundant primary decomposition. Furthermore,
$$
\mathcal{A}_C(I+J) =
\bigcup \{P+Q : P \in \mathcal{A}(I), Q \in \mathcal{A}(J)\}
$$
and so in terms of cardinality of sets, we have the relation 
$$
\#\mathcal{A}_C(I+J) = \#\mathcal{A}_C(I) \cdot \#\mathcal{A}_C(J).
$$
\end{corollary}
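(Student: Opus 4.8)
The plan is to deduce this statement directly from Lemma~\ref{lem:main002}, Lemma~\ref{lem:main003}, and Theorem~\ref{thm:SwanWalkC}; the only feature of the hypothesis that $K$ is algebraically closed that gets used is that it forces each ideal $P_k + Q_\ell$ of $C$ to be prime. So I would begin by recording that primality: for fixed $k$ and $\ell$ there is a $K$-algebra isomorphism $C/(P_k + Q_\ell) \cong (A/P_k) \otimes_K (B/Q_\ell)$, and over an algebraically closed field a tensor product of two finitely generated $K$-domains is again a domain (equivalently, affine $K$-varieties are geometrically integral). Hence $P_k + Q_\ell$ is prime, and in particular $\Min_C(C/(P_k + Q_\ell)) = \{P_k + Q_\ell\}$.

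For the displayed decomposition of $(I+J)^n$: the equality $(I+J)^n = \bigcap_{\ell}\bigcap_k \bigl(\sum_{i=0}^n p_{ik} q_{n-i,\ell}\bigr)$ is precisely Lemma~\ref{lem:main002}, applicable because the $\{p_{mk}\}$ and $\{q_{m\ell}\}$ are assumed to be filtrations (manufactured if necessary via Lemma~\ref{lem:monotone}). By Lemma~\ref{lem:main003} combined with the primality just noted, each factor $L_{n,k,\ell} := \sum_{i=0}^n p_{ik} q_{n-i,\ell}$ is either equal to $C$, in which case it may be deleted from the intersection without changing it, or else primary to the prime $P_k + Q_\ell$. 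Deleting the factors equal to $C$ therefore presents $(I+J)^n$ as a finite intersection of primary ideals, i.e.\ a primary decomposition; we assert nothing about minimality (distinct index pairs might produce comparable components, or components that are outright unnecessary), which is what the phrase \emph{possibly redundant} records.

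For the associated primes, Theorem~\ref{thm:SwanWalkC} already gives $\mathcal{A}_C(I+J) = \bigcup\{\mathcal{P} \in \Min(C/(P+Q)) : P \in \mathcal{A}(I),\ Q \in \mathcal{A}(J)\}$, and since each such $P + Q$ is prime the set $\Min(C/(P+Q))$ is the singleton $\{P+Q\}$, which yields the stated formula $\mathcal{A}_C(I+J) = \bigcup\{P + Q : P \in \mathcal{A}(I),\ Q \in \mathcal{A}(J)\}$. For the cardinality relation it then suffices to see that the resulting surjection $\mathcal{A}_C(I) \times \mathcal{A}_C(J) \to \mathcal{A}_C(I+J)$, $(P,Q) \mapsto P + Q$, is injective. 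I would get this either by citing \cite[Lemma~2.4]{HNTTBinomial}, or directly: the composite $A \to C \to C/(P+Q) \cong (A/P) \otimes_K (B/Q)$ factors through the inclusion $A/P \hookrightarrow (A/P) \otimes_K (B/Q)$ (flatness of $B/Q$ over $K$, with $B/Q \neq 0$), so the contraction $(P+Q) \cap A$ equals $P$, and symmetrically $(P+Q) \cap B = Q$; hence $P + Q = P' + Q'$ forces $P = P'$ and $Q = Q'$. Since $\mathcal{A}_C(I)$ and $\mathcal{A}_C(J)$ are finite by Brodmann's theorem, this map is a bijection and $\#\mathcal{A}_C(I+J) = \#\mathcal{A}_C(I) \cdot \#\mathcal{A}_C(J)$.

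There is no real obstacle here: once Theorem~\ref{thm:SwanWalkC} and the preparatory lemmas are available the argument is bookkeeping. The single non-formal input is the standard fact that over an algebraically closed field a tensor product of domains is a domain, and that (rather than a lemma citation) is the one spot I expect to spell out, however briefly.
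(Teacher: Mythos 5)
Your proposal is correct and follows essentially the same route as the paper: primality of $P_k + Q_\ell$ over an algebraically closed field (the paper cites Milne for this), Lemma~\ref{lem:main002} for the intersection, Lemma~\ref{lem:main003} for primariness of each proper component, and Theorem~\ref{thm:SwanWalkC} for the associated-prime and cardinality statements. Your explicit verification that $(P,Q) \mapsto P+Q$ is injective is a small point the paper leaves implicit, but it is not a different argument.
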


\begin{proof}
When $K$ is algebraically closed,
the sum of expansions for a prime ideal in $A$ and a prime ideal in $B$
is a prime ideal in $C$ -- see Milne  \cite[Prop.~4.15]{JSMilneAG}. 
Thus by Lemma~\ref{lem:main003},
$\sum_{i=0}^n p_{ik} \cdot q_{n-i,\ell}$
is $(P_k + Q_\ell)$-primary if it is proper. 
The first display in the statement of the corollary
is simply Lemma~\ref{lem:main002}.
The corollary then follows in full as a consequence of Theorem~\ref{thm:SwanWalkC}. \end{proof}

\begin{proof}[Proof of Corollary \ref{thm:SwanWalkB}] 
Let $\mathcal{P}$ be associated to $(I + J)^{n-1}$.
By Theorem~\ref{thm:SwanWalkC},
$\mathcal{P}$ is minimal over an ideal of the form $P_k + Q_\ell$,
where $P_k$ is associated to $I^i$
and $Q_\ell$ is associated to $J^j$
for some $i,j \in \{1,\ldots, n-1\}$.
Indeed take $i$ and $j$ to be the smallest positive integers such that
$P_k$ is associated to $I^i$
and $Q_\ell$ is associated to $J^j$.
Lemma~\ref{lem:mainpowers} then says that $i + j - 1 \le n-1$,
i.e., that $j \le n-i$.
By a result of Katz and Ratliff~\cite[(1.3) Theorem]{KR},
since powers of $J$ are integrally closed,
$Q_\ell$ is associated to $J^{n-i}$ as well.

By Lemma~\ref{lem:colon2},
there exists $f \in I^{i-1}$ such that $P_k = I^i : f$,
and by Lemma~\ref{lem:colon3},
there exists $g \in J^{n-i-1}$ such that $Q_\ell = J^{n-i} : g$.
Then a proof similar to the beginning part of the proof of Lemma~\ref{lem:mainpowers}
shows that $(I+J)^n : f g = P_k + Q_\ell$,
and so the prime ideal $\mathcal{P}$ minimal over $P_k + Q_\ell$
is associated to $(I+J)^n$. The corollary follows in full. \end{proof}

\section{A Few Concluding Examples}\label{sect:examples}

We close with illustrative examples addressing Remark \ref{rem:SwanWalkstrictinclusion}. We use notation as in Definition~\ref{def:powersprimarydecomp}. Recall that given a prime ideal $\mathfrak{q}$ in a Noetherian ring $R$, its \textbf{$c$-th symbolic power ($c \in \Z_{>0}$)} is $$\mathfrak{q}^{(c)}=   \mathfrak{q}^c R_\mathfrak{q} \cap R = \{f \in R \colon uf \in \mathfrak{q}^c \mbox{ for some }u \in R - \mathfrak{q} \}.$$

\begin{example}
In this example,
for all $n \ge 1$,
$$
\Ass(C/(I+J)^n) = \left\{P + Q:
P \in \bigcup_{i=1}^n \Ass(C/I^i),
Q \in \bigcup_{i=1}^n \Ass(C/J^i)\right\}.
$$
Let $I = (x_1^4, x_1^3 x_2, x_1^2 x_2^2 x_3, x_1 x_2^3, x_2^4)$
and $J = (y_1^3-y_2y_3,y_2^2-y_1y_3,y_3^2-y_1^2y_2)$.
Then $P_1 = (x_1, x_2)$ is a minimal prime over $I$
and $P_2 = (x_1, x_2, x_3)$ is associated only to $I$ and to no other power
of $I$.
For all $n \ge 1$
we have $p_{n1} = (x_1,x_2)^{4n}$,
and for filtration sake we set
$p_{n2} = p_{12} = (x_1^4, x_1^3 x_2, x_1 x_2^3, x_2^4, x_3)$.
The ideal $Q_1 = J$ is the prime ideal defining the monomial curve
$(t^3, t^4, t^5)$,
and by \cite{huneke1986primary},
$Q_2 = (y_1,y_2,y_3)$ is associated to all higher powers of $J$.
Here $q_{n1} = J^{(n)}$,
$q_{12} = C$ and other $q_{n2}$ are proper ideals
such that $\{q_{n2}\}_{n=0}^\infty$ is a filtration.
Below we need the fact that for all positive $n$,
$J^{n-1} \cap q_{n\ell}$ properly contains $J^n$.
For $\ell = 2$ this holds by comparing the $J$-primary components
and noting that the symbolic powers of $J$ are distinct,
and for $\ell = 1$ we first observe that
for a large integer $M$,
$q_{n-1,1} = J^{n-1} : y_1^M$ and
$q_{n1} = J^n : y_1^M$,
so that $q_{n-1,2} = J^{n-1} + (y_1^M)$
and $q_{n2} = J^n + (y_1^M)$.
Then
$$
J^{n-1} \cap q_{n2}
= J^{(n-1)} \cap (J^n + (y_1^M))
= J^n + J^{(n-1)} \cap (y_1^M)
= J^n + y_1^M J^{(n-1)},
$$
and $y_1^M J^{(n-1)}$ is not a subset of $J^n$
as it is not a subset after localizing at $J$.
Thus $J^{n-1} \cap q_{n1}$ properly contains $J^n$.
By Lemma~\ref{lem:main002},
$$
I+J
= \bigcap_{k = 1}^2 \left( p_{0k} q_{11} + p_{1k} q_{01} \right)
= \bigcap_{k = 1}^2 \left( q_{11} + p_{1k} \right)
= \bigcap_{k = 1}^2 \left( J + p_{1k} \right),
$$
and clearly both components are needed. For $n \ge 1$,
$$
(I+J)^n = \bigcap_{\ell = 1}^2 \bigcap_{k = 1}^2
\left( \sum_{i=0}^n p_{ik} q_{n-i,\ell} \right),
$$
and here all four components are needed as we prove next.
By Lemma~\ref{lem:main001},
$$
\bigcap_{\ell = 1}^2 \left( \sum_{i=0}^n p_{i1} q_{n-i,\ell} \right)
= \sum_{i=0}^n p_{i1} J^{n-i}
= \sum_{i=0}^n (x_1,x_2)^{4i} J^{n-i}
= \left((x_1,x_2)^4 + J\right)^n.
$$
The intersection of this with
$\sum_{i=0}^n p_{i2} q_{n-i,\ell}
= q_{n\ell} + p_{12} \sum_{i=1}^n q_{n-i,\ell}
= q_{n\ell} + (x_1^4, x_1^3 x_2, x_1 x_2^3, x_2^4, x_3)$,
contains $x_1^2x_2^2 (J^{n-1} \cap q_{n\ell})$
which is not in $(I+J)^n$.
Thus
for $n \ge 2$
in the intersection of $(I+J)^n$
as in Lemma~\ref{lem:main002}
we cannot omit any component involving $P_2$.
We certainly cannot omit the minimal component $P_1 + Q_1$,
and we cannot omit the component for $P_1 + Q_2$
because
\begin{align*}
\left( \sum_{i=0}^n p_{i1} q_{n-i,1} \right)
&\cap \left( \sum_{i=0}^n p_{i2} q_{n-i,1} \right)
\cap \left( \sum_{i=0}^n p_{i2} q_{n-i,2} \right)
=
\left( \sum_{i=0}^n (x_1,x_2)^{4i} q_{n-i,1} \right)
\cap \left( \sum_{i=0}^n p_{i2} J^{n-i} \right) \\
&=
\left( \sum_{i=0}^n (x_1,x_2)^{4i} q_{n-i,1} \right)
\cap \left( J^n + (x_1^4, x_1^3 x_2, x_1 x_2^3, x_2^4, x_3) \right)
\\
\end{align*}
contains $x_3 q_{n1}$
and is thus not a subset of $(I+J)^n$.
This proves that for all $n \ge 2$,
$(I+J)^n$ has four associated primes.
\end{example}

\begin{example}\label{exassnnot}
In this example,
for all $n \ge 2$,
$$
\Ass(C/(I+J)^n) \subsetneqq \left\{P + Q:
P \in \bigcup_{i=1}^n \Ass(C/I^i),
Q \in \bigcup_{i=1}^n \Ass(C/J^i)\right\}.
$$
Let $I$ be as in the previous example,
and let $J = (y_1^4, y_1^3 y_2, y_1^2 y_2^2 y_3, y_1 y_2^3, y_2^4)$,
which is the ideal $I$ when replacing $x_i \mapsto y_i$.
Thus $I$ and $J$ each have two associated primes
and higher powers have only one associated prime.
It is straightforward to show that $I+J$ has four associated primes,
namely all the combinations $P_i + Q_j$.
We prove next that for all $n \ge 2$,
$P_2 + Q_2$ is not associated to $(I+J)^n$,
i.e.,
that the component $p_{12} + q_{12}$ is redundant in the intersection
in the Lemma~\ref{lem:main001}.
Namely,
\begin{align*}
\left( \sum_{i=0}^n p_{i1} q_{n-i,1} \right)
&\cap \left( \sum_{i=0}^n p_{i1} q_{n-i,2} \right)
\cap \left( \sum_{i=0}^n p_{i2} q_{n-i,1} \right)
= \left( \sum_{i=0}^n p_{i1} J^{n-i} \right) 
\cap \left( q_{n1} + p_{12} \right), \\
\end{align*}
and by the nature of monomial ideals
and since $q_{n1} = J^n$,
this intersection equals
$$
\sum_{i=0}^n p_{i1} \left(J^{n-i} \cap q_{n1} \right) 
+ \sum_{i=0}^n \left( p_{i1} \cap p_{12}\right) J^{n-i}
=
J^n
+ \sum_{i=0}^n I^i J^{n-i}
= (I+J)^n.
$$
\end{example}


\bibliography{SwanWalk} 

@article{huneke1986primary,
  title={The primary components of and integral closures of ideals in 3-dimensional regular local rings},
  author={C. Huneke},
  journal={Mathematische Annalen},
  volume={275},
  number={4},
  pages={617--635},
  year={1986},
  publisher={Springer}
}

@article{yao2002primary,
  title={Primary decomposition: compatibility, independence and linear growth},
  author={Y. Yao},
  journal={Proceedings of the American Mathematical Society},
  volume={130},
  number={6},
  pages={1629--1637},
  year={2002}
}






@book{bruns2009polytopes,
  title={{Polytopes, Rings, and K-theory (Springer monographs in mathematics)}},
  author={W. Bruns, and J. Gubeladze},
  year={2009},
  publisher={Springer}
}


@article{schenzel1985symbolic,
  title={Symbolic powers of prime ideals and their topology},
  author={P. Schenzel},
  journal={Proceedings of the American Mathematical Society},
  volume={93},
  number={1},
  pages={15--20},
  year={1985}
}


@article{schenzel1986finiteness,
  title={Finiteness of relative Rees rings and asymptotic prime divisors},
  author={P. Schenzel},
  journal={Mathematische Nachrichten},
  volume={129},
  number={1},
  pages={123--148},
  year={1986},
  publisher={Wiley Online Library}
}

@article{huckaba1989symbolic,
  title={Symbolic powers of prime ideals with applications to hypersurface rings},
  author={S. Huckaba},
  journal={Nagoya Mathematical Journal},
  volume={113},
  pages={161--172},
  year={1989},
  publisher={Cambridge University Press}
}


@article{huneke2015introduction,
  title={Introduction to uniformity in commutative algebra},
  author={C. Huneke, and C. Raicu},
  journal={Commutative Algebra and Noncommutative Algebraic Geometry},
  volume={1},
  pages={163--190},
  year={2015},
  publisher={Cambridge University Press (arXiv preprint arXiv:1408.7098)}
}

@book{HunSwan,
  author={C. Huneke and I. Swanson},
  title={Integral Closure of Ideals, Rings, and Modules},
  series={London Mathematical Society Lecture Note Series, 336},
  publisher={Cambridge University Press},
  address={Cambridge},
  year={2006}
}

@article{KR,
  author={D. Katz and L. J. Ratliff},
  title={{On the prime divisors of $IJ$ when $I$ is integrally closed}},
  journal={Archiv der Mathematik}, 
  volume={51},
  year={1988},
  pages={520--522}
}

@article{mayes2014limiting,
  title={The limiting shape of the generic initial system of a complete intersection},
  author={S. Mayes},
  journal={Communications in Algebra},
  volume={42},
  number={5},
  pages={2299--2310},
  year={2014},
  publisher={Taylor \& Francis}
}

@article{mayes2014asymptotic,
  title={The asymptotic behaviour of symbolic generic initial systems of generic points},
  author={S. Mayes},
  journal={Journal of Pure and Applied Algebra},
  volume={218},
  number={3},
  pages={381--390},
  year={2014},
  publisher={Elsevier}
}

@article{de2017zariski,
  title={A Zariski-Nagata theorem for smooth {$\mathbb{Z}$}-algebras},
  author={A. De Stefani, and E. Grifo, and J. Jeffries},
  journal={arXiv preprint arXiv:1709.01049},
  year={2017}
}

@article{datta2017uniform,
  title={Uniform approximation of Abhyankar valuation ideals in function field of prime characteristic},
  author={Datta, Rankeya},
  journal={arXiv preprint arXiv:1705.00447},
  year={2017}
}






@misc{fulton1998intersection,
  title={Intersection theory, volume 2 of Ergebnisse der Mathematik und ihrer Grenzgebiete. 3. Folge. A Series of Modern Surveys in Mathematics [Results in Mathematics and Related Areas. 3rd Series. A Series of Modern Surveys in Mathematics]},
  author={W. Fulton},
  year={1998},
  publisher={Springer-Verlag, Berlin,}
}






@article{Akes01,
  title={{Ideal Containments Under Flat Extensions}},
  author={S. Akesseh},
  journal={J. Algebra},
  volume={492},
  pages={44--51},
  year={2017}
}

@unpublished{Akes001,
  author = {S. Akesseh},
  title = {{Ideal Containments Under Flat Extensions}},
  year = {2015},
  note = { \href{http://arxiv.org/abs/1512.08053}{arXiv/1512.08053}}
}


@article{Cox95,
author = {D.A. Cox},
title = {{The Homogeneous Coordinate Ring of a Toric Variety}},
journal = {J. Algebraic Geom.},
year = 1995,
volume = {4},
note = {no. 1, pp. 17--50},
}







@article{AL03,
author = {I.M. Aberbach and G.J. Leuschke},
title = {{The F-signature and strong F-regularity}},
journal = {Math. Res. Lett.},
year = 2003,
volume = {10},
note = {pp. 51--56},
}



@unpublished{Akes01,
  author = {S. Akesseh},
  title = {{Ideal Containments Under Flat Extensions}},
  year = {2015},
  note = {Unpublished paper (\href{http://arxiv.org/abs/1512.08053}{arXiv preprint})}
}






@book{Alt-Klei13,
   author = {A. Altman and S. Kleiman},
   year = {2014},
   title = {{A Term of Commutative Algebra}},
   publisher = {Worldwide Center of Mathematics LLC},
   address = {Cambridge, MA}
}







@book{AtiyahMacdonald69,
   author = {M. Atiyah and I.G. Macdonald},
   year = {1969},
   title = {{Introduction to Commutative Algebra}},
   publisher = {Addison-Wesley},
   address = {Reading, MA}
}








@book{BHPV00,
   author = {W.P. Barth and K. Hulek and C.A.M. Peters and A. Van de Ven},
   year = {2004},
   title = {{Compact Complex Surfaces,  second edition}},
   publisher = {Springer-Verlag},
   address = {Berlin}
}








@article{Primer,
author = {T. Bauer and S. Di Rocco and  B. Harbourne and  M. Kapustka and  A.L. Knutsen and W. Syzdek and  T. Szemberg},
title = {{A primer on Seshadri constants}},
journal = {Contemporary Mathematics},
year = 2009,
volume = {496},
note = {pp. 33-70  \href{https://arxiv.org/abs/0810.0728}{arXiv/0810.0728}},
}







@article{BCGH-MFO,
author = {C. Bocci and E. Carlini and E. Guardo and B. Harbourne},
title = {{Mini-Workshop: Ideals of Linear Subspaces, Their Symbolic Powers and Waring Problems}},
journal = {Oberwolfach Rep.},
year = 2015,
volume = {12},
note = {pp. 489-532. doi: 10.4171/OWR/2015/9 \href{http://www.ems-ph.org/journals/show_abstract.php?issn=1660-8933&vol=12&iss=1&rank=9}{Online Here}},
}

@article{BCGHJNSVTV-000,
author = {C. Bocci and S. Cooper and E. Guardo and B. Harbourne and M. Janssen and U. Nagel and A. Seceleanu and A. Van Tuyl and T.  Vu},
title = {{The Waldschmidt constant for squarefree monomial ideals}},
journal = {J. Algebr Comb.},
year = 2016,
volume = {44},
note = {no. 4, pp. 875–904.  doi:10.1007/s10801-016-0693-7 \href{http://arxiv.org/abs/1508.00477}{arXiv/1508.00477}},
}











@article{resurge0,
author = {C. Bocci and S. Cooper  and B. Harbourne},
title = {{Containment results for ideals of various configurations of points in $\mathbb{P}^n$}},
journal = {J. Pure Appl. Algebra},
year = 2014,
volume = {218},
note = {no.1, pp. 65-75},
}
















@article{BH1,
author = {{C. Bocci and B. Harbourne}},
title = {Comparing powers and symbolic powers of ideals},
journal = {J. Algebraic Geom.},
year = 2010,
volume = {19},
note = {no.3, pp. 399-417},
}










@article{Brodmann,
author = {M. Brodmann},
title = {{\em Asymptotic Stability of {$\operatorname{Ass}(M/I^n M)$}}},
journal = {Proc. of the AMS},
year = 1979,
volume = {74},
note = {no.1, 16-18},
}





@article{bruce2015monomial,
  title={{\em Monomial valuations, cusp singularities, and continued fractions}},
  author={J. Bruce, and M. Logue, and R.M. Walker},
  journal={Journal of Commutative Algebra},
  volume={7},
  number={4},
  pages={495--522},
  year={2015},
  publisher={Rocky Mountain Mathematics Consortium}
}








@book{bruns2009polytopes,
  title={Polytopes, rings, and K-theory},
  author={W. Bruns, and J. Gubeladze},
  year={2009},
  publisher={Springer Science \& Business Media}
}








@article{Claborn,
author = {L. Claborn},
title = {{Every abelian group is a class group}},
journal = {Pacific J. Math.},
year = 1966,
volume = {18},
note = {no.2, pp. 219-222},
}







@unpublished{CFGLMNSSV00,
  author = {S. Cooper and G. Fatabbi and E. Guardo and A. Lorenzini and J. Migliore and U. Nagel and A. Seceleanu and J. Szpond and A. Van Tuyl},
  title = {{Symbolic powers of codimension two Cohen-Macaulay ideals}},
  year = {2016},
  note = {Unpublished paper (\href{http://arxiv.org/abs/1606.00935}{arXiv preprint})}
}





@article{CooperHartke00,
author = {S.M. Cooper and S.G. Hartke},
title = {{The Alpha Problem \& Line Count Configurations}},
journal = {J. Alg},
year = 2014,
volume = {407},
note = {pp. 224--245. \href{http://arxiv.org/abs/1312.4147}{arXiv preprint}},
}






@book{torictome,
   author = {D.A. Cox and J.B. Little and H.K. Schenck},
   year = {2011},
   title = {{Toric Varieties, Graduate Studies in Mathematics 124}},
   publisher = {American Mathematical Society},
   address = {Providence, RI}
}




@article{czaplinski2016counterexample,
  title={{A counterexample to the containment $I^{(3)} \subseteq I^2$  over the reals}},
  author={A. Czapli{\'n}ski, and A. G{\l}{\'o}wka, and G. Malara, and M. Lampa-Baczy{\'n}ska, and P. {\L}uszcz-Swidecka, and P. Pokora, and J. Szpond},
  journal={Advances in Geometry},
  volume={16},
  number={1},
  pages={77--82},
  year={2016},
  publisher={De Gruyter}
}












@unpublished{5authorSymbolicSurvey,
  author = {H. Dao and A. De Stefani and E. Grifo and C. Huneke and L. N\'{u}{\~{n}}ez-Betancourt},
  title = { {\em Symbolic Powers of Ideals} },
  year = {2017},
  note = {To appear in \textit{Advances in Singularities and Foliations: Geometry, Topology and Applications}, Springer Proceedings in Mathematics \& Statistics.  \href{https://arxiv.org/abs/1708.03010}{arXiv/1708.03010}}
}


@unpublished{drabkin2017configurations,
  author = {B. Drabkin},
  title = { {\em Configurations of Linear Spaces of Codimension Two and the Containment Problem} },
  year = {2017},
  note = {\href{https://arxiv.org/abs/1704.07870}{arXiv/1704.07870}}
}

@book{LecsAlgStats09,
   author = {M. Drton, and B. Sturmfels, and S. Sullivant},
   year = {2009},
   title = {{Lectures on Algebraic Statistics}},
   publisher = {Oberwolfach Seminars, vol. 39, Birkh\"{a}user Verlag},
   address = {Basel}
}








@book{DummitFoote04,
   author = {D.S. Dummit and R.M. Foote},
   year = {2004},
   title = {{Abstract Algebra, 3rd Edition}},
   publisher = {Wiley Publishing},
   address = {Hoboken, NJ}
}





@article{dumnicki2012symbolic,
  title={Symbolic powers of ideals of generic points in {$\mathbb{P}^3$}},
  author={M. Dumnicki},
  journal={Journal of Pure and Applied Algebra},
  volume={216},
  number={6},
  year={2012}
}




@article{resurge1,
author = {M. Dumnicki and B. Harbourne and U. Nagel and A. Seceleanu and T. Szemberg and H. Tutaj-Gasi\'{n}ska},
title = {{Resurgences for ideals of special point configurations in $\mathbb{P}^n$ coming from Hyperplane Arrangements}},
journal = {J. Alg.},
year = 2015,
volume = {443},
note = {pp. 383-394. arXiv:1404.4957v1},
}







@article{DSTG01,
author = {M. Dumnicki and T. Szemberg and H. Tutaj-Gasi\'{n}ska},
title = {{Counterexamples to the $I^{(3)} \subseteq I^2$ containment}},
journal = {J. Algebra},
year = 2013,
volume = {393},
note = {pp.24-29.  \href{http://arxiv.org/abs/1301.7440}{arXiv/1301.7440}},
}


   








































@article{ELS,
author = {L. Ein and R. Lazarsfeld and K. Smith},
title = {{{\em \textit{Uniform bounds and  symbolic powers on smooth varieties}}}},
journal = {Invent. Math.},
year = 2001,
volume = {144},
note = {pp. 241-252. \href{https://arxiv.org/abs/math/0005098}{arXiv/0005098}},
}







@book{Eisenbud1,
   author = {D. Eisenbud},
   year = {1995},
   title = {{Commutative Algebra with a view towards Algebraic Geometry, Graduate Texts in Math. \textbf{150}}},
   publisher = {Springer-Verlag},
   address = {New York}
}



@book{eisenbud2005geometry,
  title={The geometry of syzygies: a second course in algebraic geometry and commutative algebra},
  author={D. Eisenbud},
  volume={229},
  year={2005},
  publisher={Springer Science \& Business Media}
}




@book{eisenbud20163264,
  title={3264 and all that: A second course in algebraic geometry},
  author={D. Eisenbud, and J. Harris},
  year={2016},
  publisher={Cambridge University Press}
}






@article{Eisenbud-Hochster79,
author = {D. Eisenbud and M. Hochster},
title = {{A Nullstellensatz with Nilpotents and Zariski's Main Lemma on Holomorphic Functions}},
journal = {J. Algebra},
year = 1979,
volume = {58},
note = {pp. 157-161},
}







@article{ellenberg2016furstenberg,
  title={{Furstenberg sets and Furstenberg schemes over finite fields}},
  author={J. Ellenberg, and D. Erman},
  journal={Algebra \& Number Theory},
  volume={10},
  number={7},
  pages={1415--1436},
  year={2016},
  publisher={Mathematical Sciences Publishers}
}






@unpublished{Erman-arXiv0715,
author = {D. Erman},
title = {{\em Divergent Series and Serre's intersection formula for graded rings}},
year = 2015,
note = {\href{https://arxiv.org/abs/1507.08928}{arXiv/1507.08928} },
}


@unpublished{ErSaSnow17,
author = {D. Erman, and S.V. Sam, and A. Snowden},
title = {{Generalizations of Stillman's conjecture via twisted commutative algebra}},
year = 2017,
note = {\href{http://www-personal.umich.edu/~asnowden/papers/genstillman-071517.pdf}{In preparation, Preliminary draft.}},
} 



@article{erman2017random,
  title={{\em Random Flag Complexes and Asymptotic Syzygies}},
  author={D. Erman, and J. Yang},
  journal={\href{https://arxiv.org/abs/1706.01488}{arXiv:1706.01488}},
  year={2017}
}





@unpublished{GFEG-000,
  author = {G. Favacchio and E. Guardo},
title = {{The minimal free resolution of fat almost complete intersections in $\mathbb{P}^1 \times \mathbb{P}^1$}},
journal = {Math. Res. Lett.},
year = {2016},
  note = {Unpublished paper (\href{http://arxiv.org/abs/1605.04769}{arXiv preprint})}
} 







@book{fossum2012divisor,
  title={The divisor class group of a Krull domain},
  author={R.M. Fossum},
  volume={74},
  year={2012},
  publisher={Springer Science \& Business Media}
}








@unpublished{LFPMYX000,
author = {L. Fouli, and P. Mantero, and Yu Xie},
title = {{\em Chudnovsky's Conjecture for very general points in $\mathbb{P}^N_k$}},
year = 2016,
note = {Unpublished paper (\href{http://arxiv.org/abs/1604.02217}{arXiv preprint})},
}




@book{fulton2013intersection,
  title={Intersection theory},
  author={W. Fulton},
  volume={2},
  year={2013},
  publisher={Springer Science \& Business Media}
}





@book{introtoric,
   author = {W. Fulton},
   year = {1993},
   title = {{Introduction to Toric Varieties, Annals of Math. Studies 131}},
   publisher = {Princeton University Press},
   address = {Princeton, NJ}
}



@article{fulton1997intersection,
  title={{\em Intersection theory on toric varieties}},
  author={W. Fulton, and B. Sturmfels},
  journal={Topology},
  volume={36},
  number={2},
  pages={335--353},
  year={1997},
  publisher={Elsevier}
}


@article{FulSturm0,
author = {W. Fulton, and B. Sturmfels},
title = {{\em Intersection Theory on Toric Varieties}},
journal = {Topology},
year = 1997,
volume = {36},
note = {pp. 335-353},
}






@article{gibbons2015non,
  title={{\em Non-simplicial decompositions of Betti diagrams of complete intersections}},
  author={C. Gibbons, and J. Jeffries,  and S. Mayes, and C. Raicu, and B. Stone,  and B. White},
  journal={Journal of Commutative Algebra},
  volume={7},
  number={2},
  pages={189--206},
  year={2015},
  publisher={Rocky Mountain Mathematics Consortium}
}







@article{GotoWatanabeGraded1,
author = {S. Goto, and K.-i. Watanabe},
title = {{\em On graded rings, I}},
journal = {J. Math. Soc. Japan},
year = 1978,
volume = {30},
note = {no. 2, 179--213. \href{https://projecteuclid.org/euclid.jmsj/1240432552}{Project Euclid Link}},
}













@unpublished{M2,
  author = {D.R. Grayson, and M.E. Stillman},
  title = {{Macaulay2, a software system for research in algebraic geometry}},
  year = {1992},
  note = {Available at http://www.uiuc.edu/Macaulay2/},
}





@unpublished{Grifo18,
author = {E. Grifo},
title = {{\em Symbolic powers and the Containment Problem over regular rings}},
year = 2018,
note = {Working title for doctoral thesis in progress. \href{http://people.virginia.edu/~er2eq/grifo_researchstatement.pdf}{Section 1 of  research statement}},
} 




@unpublished{GrifoHun00,
author = {E. Grifo and  C. Huneke},
title = {{\em \textit{Symbolic powers of ideals defining F-pure and strongly F-regular rings}}},
year = 2017,
note = {International Mathematics Research Notices, DOI:10.1093/imrn/rnx213  \href{https://arxiv.org/abs/1702.06876}{arXiv/1702.06876}},
} 






@book{EGAIV,
   author = {A. Grothendieck and J. Dieudonne},
   year = {1969},
   title = {{Elements de geometrie algebrique IV, Etude locale des schemas et des morphismes de schemas, Seconde partie,}},
   publisher = {Publications Mathematiques de l`IHES},
   address = {France}
}




@unpublished{HNTTBinomial,
author = {H. T. H\`{a} and H. D. Nguyen and N. V. Trung and T. N. Trung},
title = {{Symbolic Powers of Sums of Ideals}},
year = 2017,
note = {\href{https://arxiv.org/abs/1702.01766}{arXiv/1702.01766}},
}

  


     



@unpublished{hammonds2017counting,
  author = {T. Hammonds, and J. Johnson, and A. Patini, and R.M. Walker},
  title = {{Counting Roots of Polynomials over $\Z / p^2 \Z$}},
  year = {2017},
  note = {Submitted to Houston Journal of Mathematics.  \href{http://arxiv.org/abs/1708.04713}{arXiv:1708.04713}}
}








@article{harb-hun,
author = {B. Harbourne and C. Huneke},
title = {{Are symbolic powers highly evolved?}},
journal = {J. Ramanujan Math. Soc.},
year = 2013,
volume = {28A},
note = {pp. 247-266.  \href{http://arxiv.org/abs/1103.5809}{arXiv/1103.5809}},
}

@article{resurge2,
author = {B. Harbourne and A. Seceleanu},
title = {{Containment counterexamples for ideals of various configurations of points in $\mathbb{P}^n$}},
journal = {J. Pure Appl. Algebra},
year = 2015,
volume = {219},
note = {no.4, pp. 1062-1072. \href{http://arxiv.org/abs/1306.3668}{arXiv/1306.3668}},
}




@Book{Hartley2004, 
    author = "Hartley, R.~I. and Zisserman, A.",
    title = "Multiple View Geometry in Computer Vision",
    edition = "Second",
    year = "2004",
    publisher = "Cambridge University Press, ISBN: 0521540518"









@book{Hartsh0,
   author = {R. Hartshorne},
   year = {1977},
   title = {{Algebraic Geometry, Graduate Texts in Math. \textbf{52}}},
   publisher = {Springer-Verlag},
   address = {New York}
}
     




@article{hernandez2017local,
  title={{Local Okounkov bodies and limits in prime characteristic}},
  author={D.J. Hern\'{a}ndez, and J. Jeffries},
  journal={\href{http://arxiv.org/abs/1701.02575}{arXiv/1701.02575}},
  year={2017}
}





@inproceedings{hoch0,
   author={M. Hochster},
   year={1980},
   title={{Cohen-Macaulay Rings and Modules}},
   booktitle={Proceedings of the International Congress of Mathematicians, Helsinki, Finland},
   pages={291-298},
   publisher={V. I, Academia Scientiarum Fennica},
   address={Helsinki, Finland}
}




@article{hoch1,
author = {M. Hochster},
title = {{Rings of invariants of tori, Cohen-Macaulay rings generated by monomials, and polytopes}},
journal = {Ann. of Math},
year = 1972,
volume = {96},
note = {no.2, pp. 318-337},
}
   

@unpublished{hoch2,
  author = {M. Hochster},
  title = {{Math 615 Winter 2007 Lecture 4/6/07}},
  year = {2007},
  note = {Available at  \href{http://www.math.lsa.umich.edu/~hochster/615W07/L04.06.pdf}{Online link.}}
}



@article{HH1,
author = {M. Hochster and C. Huneke},
title = {{Comparison of ordinary and symbolic powers of ideals}},
journal = {Invent. Math.},
year = 2002,
volume = {147},
note = {pp. 349-369 \href{https://arxiv.org/abs/math/0211174}{arXiv/0211174}},
}
 
 
 
 
@article{HH2,
author = {M. Hochster and C. Huneke},
title = {{Fine behavior of symbolic powers of ideals}},
journal = {Illinois J. Math.},
year = 2007,
volume = {51},
note = {no. 1, pp. 171--183},
} 




@article{hun1,
author = {C. Huneke},
title = {{Uniform bounds in Noetherian rings}},
journal = {Invent. Math.},
year = 1992,
volume = {107},
note = {pp. 203-223},
}



@article{HKV,
author = {C. Huneke and D. Katz  and J. Validashti},
title = {{Uniform equivalence of symbolic and adic topologies}},
journal = {Illinois J. Math.},
year = 2009,
volume = {53},
note = {no. 1, pp. 325--338},
}



@article{HKV2,
author = {C. Huneke and D. Katz  and J. Validashti},
title = {{Uniform symbolic topologies and finite extensions}},
journal = {J. Pure Appl. Algebra},
year = 2015,
volume = {219},
note = {no. 3, pp. 543--550},
}





@article{HL02,
author = {C. Huneke and G.J. Leuschke},
title = {{Two theorems about maximal Cohen-Macaulay modules}},
journal = {Math. Ann},
year = 2002,
volume = {324},
note = {no. 2, pp. 391--404},
}
\bibitem{HL02}  C. Huneke and G. J. Leuschke. {\em Two theorems about maximal Cohen-Macaulay modules}. Math. Ann \textbf{324} (2002), no. 2, pp. 391--404.






@article{MRJ1,
author = {M.R. Johnson},
title = {{Containing symbolic powers in regular rings}},
journal = {Comm. in Algebra},
year = 2014,
volume = {42},
note = {no. 8, pp. 3552--3557},
}





@book{kemper2010course,
  title={A course in commutative algebra},
  author={G. Kemper},
  volume={256},
  year={2010},
  publisher={Springer Science \& Business Media}
}








@article{Lip0,
author = {J. Lipman},
title = {{Rational singularities, with applications to algebraic surfaces and unique factorization}},
journal = {Inst. Hautes \'{E}tudes Sci. Publ. Math.},
year = 1969,
volume = {36},
note = {pp. 195--279},
}


@article{Lip78,
author = {J. Lipman},
title = {{Desingularization of two-dimensional schemes}},
journal = {Ann. Math.},
year = 1978,
volume = {107},
note = {no. 2, pp. 151--207},
}





@unpublished{MaSchwede17,
author = {L. Ma and K. Schwede},
title = {{Perfectoid multiplier/test ideals in regular rings and bounds on symbolic powers}},
year = 2017,
note = {   \href{https://arxiv.org/abs/1705.02300}{arXiv/1705.02300}   },
}






@inproceedings{maclagan2003multigraded,
  title={Multigraded Castelnuovo-Mumford Regularity},
  author={D. Maclagan, and G. G. Smith},
  booktitle={J. Reine Angew. Math},
  year={2003},
  organization={Citeseer}
}





@unpublished{Malara-Szpond-0217,
author = {G. Malara, and J. Szpond},
title = {{\em Fermat-type configurations of lines in $\mathbb{P}^3$ and the containment problem}},
year = 2017,
note = {Unpublished paper.  \href{https://arxiv.org/abs/1702.02160}{arxiv/1702.02160}  },
}

@unpublished{Malara-Szpond-0517,
author = {G. Malara, and J. Szpond},
title = {{\em On codimension two flats in Fermat-type arrangements}},
year = 2017,
note = {Unpublished paper.  \href{https://arxiv.org/abs/1705.00639}{arxiv/1705.00639}  },
}


@unpublished{Malara-Szpond-0917,
author = {G. Malara, and J. Szpond},
title = {{\em The containment problem and a rational simplicial arrangement}},
year = 2017,
note = {Unpublished paper.  \href{https://arxiv.org/abs/1709.01988}{arxiv/1709.01988}  },
}




@book{Matsumura,
   author = {H. Matsumura},
   year = {1989},
   title = {{Commutative Ring Theory}},
   publisher = {Cambridge Univ. Press},
   address = {Cambridge, MA}
}





@article{mayes2014limiting,
  title={{\em The limiting shape of the generic initial system of a complete intersection}},
  author={S. Mayes},
  journal={Communications in Algebra},
  volume={42},
  number={5},
  pages={2299--2310},
  year={2014},
  publisher={Taylor \& Francis}
}

@article{mayes2015stabilization,
  title={{\em Stabilization of Boij-S\"{o}derberg Decompositions of Ideal Powers}},
  author={S. Mayes-Tang},
  journal={\href{https://arxiv.org/abs/1509.08544}{arXiv:1509.08544}},
  year={2015}
}

@article{mayes2016asymptotic,
  title={{\em Asymptotic stabilization of Betti diagrams of generic initial systems}},
  author={S. Mayes-Tang},
  journal={\href{https://arxiv.org/abs/1608.06670}{arXiv:1608.06670}},
  year={2016}
}






@book{miller2004combinatorial,
  title={Combinatorial commutative algebra},
  author={E. Miller, and B. Sturmfels},
  volume={227},
  year={2004},
  publisher={Springer Science \& Business Media}
}






@unpublished{JSMilneAG,
author = {J.S. Milne},
title = {{Algebraic Geometry}},
year = 2013,
note = {Lecture Notes, Version 5.22.   \href{http://www.jmilne.org/math/CourseNotes/ag.html}{Online Link}},
}






@article{AAMore13,
author = {A.A. More},
title = {{Uniform bounds on symbolic powers}},
journal = {J. Algebra},
year = 2013,
volume = {383},
note = {pp. 29-41},
}

@article{Murthy1,
author = {M.P. Murthy},
title = {Vector bundles over affine surfaces birationally equivalent to a ruled surface},
journal = {Ann. of Math},
year = 2013,
volume = {89},
note = {no.2, pp. 242-253.},
}
     




@book{AG-Codes-Basics07,
   author = {D. Nogin, and M.  Tsfasman, and S. Vl\v{a}dut},
   year = {2007},
   title = {{Algebraic Geometric Codes: Basic Notions (Mathematical Surveys and Monographs)}},
   publisher = {American Mathematical Society},
   address = {Providence, RI}
}




@book{peeva2010graded,
  title={Graded syzygies},
  author={I. Peeva},
  volume={14},
  year={2010},
  publisher={Springer Science \& Business Media}
}


@article{polstra2016f,
  title={{F-signature and Hilbert-Kunz Multipicity: a combined approach and comparison}},
  author={T. Polstra, and K. Tucker},
  journal={\href{http://arxiv.org/abs/1608.02678}{arXiv/1608.02678}},
  year={2016}
}



@book{roberts1998multiplicities,
  title={Multiplicities and Chern classes in local algebra},
  author={P.C. Roberts},
  volume={133},
  year={1998},
  publisher={Cambridge University Press}
}





@article{seceleanu2015homological,
  title={{\em A homological criterion for the containment between symbolic and ordinary powers of some ideals of points in {$\mathbb{P}^2$}}} ,
  author={A. Seceleanu},
  journal={Journal of Pure and Applied Algebra},
  volume={219},
  number={11},
  pages={4857--4871},
  year={2015},
  publisher={Elsevier}
}





@book{IgShaf001,
   author = {I. Shafarevich},
   year = {2013},
   title = {Basic Algebraic Geometry I},
   publisher = {Springer-Verlag},
   address = {Berlin Heidelberg}
}



@article{SidSull001,
author = {J. Sidman and S. Sullivant},
title = {Prolongations and Computational Algebra},
journal = {Canad. J. Math},
year = 2009,
volume = {61},
note = {pp. 930-949},
}






@article{Sin05,
author = {A.K. Singh},
title = {{The F-Signature of an affine semigroup ring}},
journal = {J. Pure Appl. Algebra},
year = 2005,
volume = {196},
note = {no. 2-3, pp. 313--321},
}















@article{SinghSpiroff000,
author = {A.K. Singh, and S. Spiroff},
title = {Divisor class groups of graded hypersurfaces},
journal = {Contemporary Mathematics},
year = 2007,
volume = {448},
note = {pp. 237-243},
}










@unpublished{Sta16,
author = {Stacks Project authors},
title = {{The Stacks Project}},
year = 2016,
note = {Open source textbook  \href{http://stacks.math.columbia.edu/}{Online Link}   },
}















@article{BStSSull001,
author = {B. Sturmfels, and S. Sullivant},
title = {Combinatorial Secant Varieties},
journal = {Quarterly Journal of Pure and Applied Mathematics},
year = 2006,
volume = {2},
note = {pp. 285-309  (Special issue in honor of Robert Macpherson).  \href{http://arxiv.org/abs/math/0506223}{arXiv preprint}},
}

@article{Sull001,
author = {S. Sullivant},
title = {\em Combinatorial Symbolic Powers},
journal = {J. Algebra},
year = 2008,
volume = {319},
note = {no. 1, pp. 115--142. \href{http://arxiv.org/abs/math/0608542}{arXiv/060854} },
}












@article{i-swanson,
author = {I. Swanson},
title = {{\em Linear equivalence of topologies}},
journal = {Math.Z.},
year = 2000,
volume = {234},
note = {no.4, pp. 755-775},
}


@unpublished{SzemSzpo01,
author = {T. Szemberg and J. Szpond},
title = {On the containment problem},
year = 2016,
note = { \href{http://arxiv.org/abs/1601.01308}{arXiv/1601.01308}},
}



@article{TaYo1,
author = {S. Takagi and K. Yoshida},
title = {Generalized test ideals and symbolic powers},
journal = {Michigan Math. J.},
year = 2008,
volume = {57},
note = {pp. 711-725},
}






@article{KT11,
author = {K. Tucker},
title = {F-Signature Exists},
journal = {Invent. Math.},
year = 2012,
volume = {19},
note = {no.3, pp. 743--765.  \href{https://arxiv.org/pdf/1103.4173.pdf}{arXiv/1103.4173}},
}







@unpublished{MVKorff,
author = {M. {Von Korff}},
title = {{The $F$-Signature of Toric Varieties}},
year = 2012,
note = {Dissertation.  \href{http://search.proquest.com/docview/1149994294}{ProQuest Link},   \href{http://arxiv.org/abs/1110.0552}{arXiv/1110.0552}},
}





















@article{walker2016,
author = "Walker, R.M.",
fjournal = "Illinois Journal of Mathematics",
journal = "Illinois J. Math.",
number = "2",
pages = "541--550 ",
publisher = "University of Illinois at Urbana-Champaign, Department of Mathematics",
title = "Rational singularities and uniform symbolic topologies",
url = "http://projecteuclid.org/euclid.ijm/1499760021",
volume = "60",
year = "2016"
}





@unpublished{Walker005,
author = {R.M. Walker},
title = {{\em A Stanley-Reisner Ring that fails to have of Uniform Symbolic Topologies}},
year = 2017,
note = {Unpublished paper.  \href{https://arxiv.org/abs/1706.06576}{arXiv/1706.06576}   },
}



@unpublished{marswalkDiss,
author = {R.M. Walker},
title = {{Uniform Symbolic Topologies in Non-Regular Rings}},
year = 2018,
note = {Dissertation. Available upon Request},
}






@article{WY04,
author = {K. Watanabe and K. Yoshida},
title = {{Minimal relative Hilbert-Kunz multiplicity}},
journal = {Illinois J. Math.},
year = 2004,
volume = {48},
note = {no.1, pp. 273-294. \href{https://projecteuclid.org/euclid.ijm/1258136184}{Project Euclid link} },
}



@article{Yao06,
author = {Y. Yao},
title = {{Observations on the F-signature of local rings of characteristic $p$}},
journal = {J. Algebra},
year = 2006,
volume = {299},
note = {no.1, pp. 198-218},
}




@book{SamZarki001,
   author = {O. Zariski and P. Samuel},
   year = {1960},
   title = {{Commutative Algebra, Vol. 1}},
   publisher = {Springer},
   address = {New York}
}


@book{SamZarki002,
   author = {O. Zariski and P. Samuel},
   year = {1960},
   title = {{Commutative Algebra, Vol. 2}},
   publisher = {Springer},
   address = {New York}
}







--------------------------


@article{AL03,
author = {I.M. Aberbach and G.J. Leuschke},
title = {The F-signature and strong F-regularity},
journal = {Math. Res. Lett.},
year = 2003,
volume = {10},
note = {pp. 51--56},
}



@unpublished{Akes01,
  author = {Akesseh, S.},
  title = {Ideal Containments Under Flat Extensions},
  year = {2015},
  note = {Unpublished paper (\href{http://arxiv.org/abs/1512.08053}{arXiv preprint})}
}






@book{Alt-Klei13,
   author = {A. Altman and S. Kleiman},
   year = {2014},
   title = {A Term of Commutative Algebra},
   publisher = {Worldwide Center of Mathematics LLC},
   address = {Cambridge, MA}
}







@book{AtiyahMacdonald69,
   author = {M. Atiyah and I.G. Macdonald},
   year = {1969},
   title = {Introduction to Commutative Algebra},
   publisher = {Addison-Wesley},
   address = {Reading, MA}
}








@book{BHPV00,
   author = {W.P. Barth, K. Hulek, C.A.M. Peters, and A. Van de Ven},
   year = {2004},
   title = {Compact Complex Surfaces,  second edition},
   publisher = {Springer-Verlag},
   address = {Berlin}
}








@article{Primer,
author = {T. Bauer, S. Di Rocco, B. Harbourne, M. Kapustka, A.L. Knutsen, W. Syzdek, T. Szemberg},
title = {A primer on Seshadri constants},
journal = {Contemporary Mathematics},
year = 2009,
volume = {496},
note = {pp. 33-70},
}







@article{BCGH-MFO,
author = {C. Bocci, E. Carlini, E. Guardo, B. Harbourne},
title = {Mini-Workshop: Ideals of Linear Subspaces, Their Symbolic Powers and Waring Problems},
journal = {Oberwolfach Rep.},
year = 2015,
volume = {12},
note = {pp. 489-532. doi: 10.4171/OWR/2015/9 \href{http://www.ems-ph.org/journals/show_abstract.php?issn=1660-8933&vol=12&iss=1&rank=9}{Online Here}},
}

@article{BCGHJNSVTV-000,
author = {C. Bocci, S. Cooper, E. Guardo, B. Harbourne, M. Janssen, U. Nagel, A. Seceleanu, A. Van Tuyl, T.  Vu.},
title = {The Waldschmidt constant for squarefree monomial ideals},
journal = {J. Algebr Comb.},
year = 2016,
volume = {44},
note = {no. 4, pp. 875--904. \href{http://arxiv.org/abs/1508.00477}{arXiv preprint}},
}











@article{resurge0,
author = {C. Bocci, S. Cooper, and B. Harbourne},
title = {Containment results for ideals of various configurations of points in $\mathbb{P}^n$},
journal = {J. Pure Appl. Algebra},
year = 2014,
volume = {218},
note = {no.1, pp. 65-75},
}
















@article{BH1,
author = {C. Bocci and B. Harbourne},
title = {Comparing powers and symbolic powers of ideals},
journal = {J. Algebraic Geom.},
year = 2010,
volume = {19},
note = {no.3, pp. 399-417},
}










@article{Brodmann,
author = {M. Brodmann},
title = {Asymptotic Stability of $Ass(M/I^n M)$},
journal = {Proc. of the AMS},
year = 1979,
volume = {74},
note = {no.1, pp. 16-18},
}














@article{Claborn,
author = {L. Claborn},
title = {Every abelian group is a class group},
journal = {Pacific J. Math.},
year = 1966,
volume = {18},
note = {no.2, pp. 219-222},
}







@unpublished{CFGLMNSSV00,
  author = {S. Cooper, G. Fatabbi, E. Guardo, A. Lorenzini, J. Migliore, U. Nagel, A. Seceleanu, J. Szpond, A. Van Tuyl},
  title = {Symbolic powers of codimension two Cohen-Macaulay ideals},
  year = {2016},
  note = {Unpublished paper (\href{http://arxiv.org/abs/1606.00935}{arXiv preprint})}
}





@article{CooperHartke00,
author = {S.M. Cooper and S.G. Hartke},
title = {The Alpha Problem \& Line Count Configurations},
journal = {J. Alg},
year = 2014,
volume = {407},
note = {pp. 224--245. \href{http://arxiv.org/abs/1312.4147}{arXiv preprint}},
}






@book{torictome,
   author = {D.A. Cox, J.B. Little, and H.K. Schenck},
   year = {2011},
   title = {Toric Varieties, Graduate Studies in Mathematics 124},
   publisher = {American Mathematical Society},
   address = {Providence, RI}
}


 
        
    






























































@book{DummitFoote04,
   author = {D.S. Dummit, R.M. Foote},
   year = {2004},
   title = {Abstract Algebra, 3rd Edition},
   publisher = {Wiley Publishing},
   address = {Hoboken, NJ}
}







@article{resurge1,
author = {M. Dumnicki, B. Harbourne, U. Nagel, A. Seceleanu, T. Szemberg, and H. Tutaj-Gasi\'{n}ska},
title = {Resurgences for ideals of special point configurations in $\mathbb{P}^n$ coming from Hyperplane Arrangements},
journal = {J. Alg.},
year = 2015,
volume = {443},
note = {pp. 383-394. arXiv:1404.4957v1},
}







@article{DSTG01,
author = {M. Dumnicki, T. Szemberg, and H. Tutaj-Gasi\'{n}ska},
title = {Counterexamples to the $I^{(3)} \subseteq I^2$ containment},
journal = {J. Algebra},
year = 2013,
volume = {393},
note = {pp.24-29.  \href{http://arxiv.org/abs/1301.7440}{arXiv preprint}},
}


   








































@article{ELS,
author = {L. Ein, R. Lazarsfeld, and K. Smith},
title = {Uniform bounds and  symbolic powers on smooth varieties},
journal = {Invent. Math.},
year = 2001,
volume = {144},
note = {pp. 241-252 \href{https://arxiv.org/abs/math/0005098}{arXiv/0005098}},
}







@book{Eisenbud1,
   author = {D. Eisenbud},
   year = {1995},
   title = {Commutative Algebra with a view towards Algebraic Geometry, Graduate Texts in Math. \textbf{150}},
   publisher = {Springer-Verlag},
   address = {New York}
}


@article{Eisenbud-Hochster79,
author = {D. Eisenbud and M. Hochster},
title = {A Nullstellensatz with Nilpotents and Zariski's Main Lemma on Holomorphic Functions},
journal = {J. Algebra},
year = 1979,
volume = {58},
note = {pp. 157-161},
}





@unpublished{GFEG-000,
  author = {G. Favacchio and E. Guardo},
title = {The minimal free resolution of fat almost complete intersections in $\mathbb{P}^1 \times \mathbb{P}^1$},
journal = {Math. Res. Lett.},
year = {2016},
  note = {Unpublished paper (\href{http://arxiv.org/abs/1605.04769}{arXiv preprint})}
} 





@unpublished{LFPMYX000,
author = {Louiza Fouli, Paolo Mantero, Yu Xie},
title = {Chudnovsky's Conjecture for very general points in $\mathbb{P}^N_k$},
year = 2016,
note = {Unpublished paper (\href{http://arxiv.org/abs/1604.02217}{arXiv preprint})},
}





@book{introtoric,
   author = {W. Fulton},
   year = {1993},
   title = {Introduction to Toric Varieties, Annals of Math. Studies 131},
   publisher = {Princeton University Press},
   address = {Princeton, NJ}
}




@article{FulSturm0,
author = {W. Fulton and B. Sturmfels},
title = {Intersection Theory on Toric Varieties},
journal = {Topology},
year = 1997,
volume = {36},
note = {pp. 335-353},
}






@article{GotoWatanabeGraded1,
author = {S. Goto and K.-i. Watanabe.},
title = {On graded rings, I},
journal = {J. Math. Soc. Japan},
year = 1978,
volume = {30},
note = {no. 2, 179--213. \href{https://projecteuclid.org/euclid.jmsj/1240432552}{Project Euclid Link}},
}













@unpublished{M2,
  author = {D.R. Grayson and M.E. Stillman},
  title = {Macaulay 2, a software system for research in algebraic geometry},
  year = {1992},
  note = {Available at http://www.uiuc.edu/Macaulay2/}
}






@unpublished{GrifoHun00,
author = {E. Grifo and  C. Huneke},
title = {Symbolic powers of ideals defining F-pure and strongly F-regular rings},
year = 2017,
note = {International Mathematics Research Notices, rnx213, \href{https://doi.org/10.1093/imrn/rnx213}{https://doi.org/10.1093/imrn/rnx213} \href{https://arxiv.org/abs/1702.06876}{arXiv/1702.06876}},
}






@book{EGAIV,
   author = {A. Grothendieck and J. Dieudonne},
   year = {1969},
   title = {Elements de geometrie algebrique IV, Etude locale des schemas et des morphismes de schemas, Seconde partie,},
   publisher = {Publications Mathematiques de l`IHES},
   address = {France}
}




@unpublished{HNTTBinomial,
author = {H.T. H\`{a}, H.D. Nguyen, N.V. Trung, T.N. Trung},
title = {Symbolic Powers of Sums of Ideals},
year = 2017,
note = {\href{https://arxiv.org/abs/1702.01766}{arXiv preprint}},
}

  


     







@article{harb-hun,
author = {B. Harbourne and C. Huneke},
title = {Are symbolic powers highly evolved?},
journal = {J. Ramanujan Math. Soc.},
year = 2013,
volume = {28A},
note = {pp. 247-266.  \href{http://arxiv.org/abs/1103.5809}{arXiv preprint}},
}

@article{resurge2,
author = {B. Harbourne and A. Seceleanu},
title = {Containment counterexamples for ideals of various configurations of points in $\mathbb{P}^n$},
journal = {J. Pure Appl. Algebra},
year = 2015,
volume = {219},
note = {no.4, pp. 1062-1072. \href{http://arxiv.org/abs/1306.3668}{arXiv preprint}},
}



@book{Hartsh0,
   author = {R. Hartshorne},
   year = {1977},
   title = {Algebraic Geometry, Graduate Texts in Math. \textbf{52}},
   publisher = {Springer-Verlag},
   address = {New York}
}
     







@inproceedings{hoch0,
   author={M. Hochster},
   year={1980},
   title={Cohen-Macaulay Rings and Modules},
   booktitle={Proceedings of the International Congress of Mathematicians, Helsinki, Finland},
   pages={291-298},
   publisher={V. I, Academia Scientiarum Fennica},
   address={Helsinki, Finland}
}




@article{hoch1,
author = {M. Hochster},
title = {Rings of invariants of tori, Cohen-Macaulay rings generated by monomials, and polytopes},
journal = {Ann. of Math},
year = 1972,
volume = {96},
note = {no.2, pp. 318-337},
}
   

@unpublished{hoch2,
  author = {M. Hochster},
  title = {Math 615 Winter 2007 Lecture 4/6/07},
  year = {2007},
  note = {Available at  \href{http://www.math.lsa.umich.edu/~hochster/615W07/L04.06.pdf}{Online link.}}
}



@article{HH1,
author = {M. Hochster and C. Huneke},
title = {Comparison of ordinary and symbolic powers of ideals},
journal = {Invent. Math.},
year = 2002,
volume = {147},
note = {pp. 349-369  \href{https://arxiv.org/abs/math/0211174}{arXiv/0211174}},
}
 
 
 
 
@article{HH2,
author = {M. Hochster and C. Huneke},
title = {Fine behavior of symbolic powers of ideals},
journal = {Illinois J. Math.},
year = 2007,
volume = {51},
note = {no. 1, pp. 171--183},
} 




@article{hun1,
author = {C. Huneke},
title = {Uniform bounds in Noetherian rings},
journal = {Invent. Math.},
year = 1992,
volume = {107},
note = {pp. 203-223},
}



@article{HKV,
author = {C. Huneke, D. Katz, and J. Validashti},
title = {Uniform equivalence of symbolic and adic topologies},
journal = {Illinois J. Math.},
year = 2009,
volume = {53},
note = {no. 1, pp. 325--338},
}



@article{HKV2,
author = {C. Huneke, D. Katz, and J. Validashti},
title = {Uniform symbolic topologies and finite extensions},
journal = {J. Pure Appl. Algebra},
year = 2015,
volume = {219},
note = {no. 3, pp. 543--550},
}





@article{HL02,
author = {C. Huneke and G.J. Leuschke},
title = {Two theorems about maximal Cohen-Macaulay modules},
journal = {Math. Ann},
year = 2002,
volume = {324},
note = {no. 2, pp. 391--404},
}
\bibitem{HL02}  C. Huneke and G. J. Leuschke. {\em Two theorems about maximal Cohen-Macaulay modules}. Math. Ann \textbf{324} (2002), no. 2, pp. 391--404.






@article{MRJ1,
author = {M.R. Johnson},
title = {Containing symbolic powers in regular rings},
journal = {Comm. in Algebra},
year = 2014,
volume = {42},
note = {no. 8, pp. 3552--3557},
}


@article{Lip0,
author = {J. Lipman},
title = {Rational singularities, with applications to algebraic surfaces and unique factorization},
journal = {Inst. Hautes \'{E}tudes Sci. Publ. Math.},
year = 1969,
volume = {36},
note = {pp. 195--279},
}


@article{Lip78,
author = {J. Lipman},
title = {Desingularization of two-dimensional schemes},
journal = {Ann. Math.},
year = 1978,
volume = {107},
note = {no. 2, pp. 151--207},
}





@unpublished{MaSchwede17,
author = {L. Ma and K. Schwede},
title = {Perfectoid multiplier/test ideals in regular rings and bounds on symbolic powers},
year = 2017,
note = {Unpublished paper.   \href{https://arxiv.org/abs/1705.02300}{arXiv preprint}   },
}













@book{Matsumura,
   author = {H. Matsumura},
   year = {1989},
   title = {Commutative Ring Theory},
   publisher = {Cambridge Univ. Press},
   address = {Cambridge, MA}
}





@unpublished{JSMilneAG,
author = {J.S. Milne},
title = {Algebraic Geometry},
year = 2013,
note = {Lecture Notes, Version 5.22.   \href{http://www.jmilne.org/math/CourseNotes/ag.html}{Online Link}},
}






@article{AAMore13,
author = {A.A. More},
title = {Uniform bounds on symbolic powers},
journal = {J. Algebra},
year = 2013,
volume = {383},
note = {pp. 29-41},
}

@article{Murthy1,
author = {M.P. Murthy},
title = {Vector bundles over affine surfaces birationally equivalent to a ruled surface},
journal = {Ann. of Math},
year = 2013,
volume = {89},
note = {no.2, pp. 242-253.},
}
     









@book{IgShaf001,
   author = {I. Shafarevich},
   year = {2013},
   title = {Basic Algebraic Geometry I},
   publisher = {Springer-Verlag},
   address = {Berlin Heidelberg}
}



@article{SidSull001,
author = {J. Sidman and S. Sullivant},
title = {Prolongations and Computational Algebra},
journal = {Canad. J. Math},
year = 2009,
volume = {61},
note = {pp. 930-949},
}






@article{Sin05,
author = {A.K. Singh},
title = {The F-Signature of an affine semigroup ring},
journal = {J. Pure Appl. Algebra},
year = 2005,
volume = {196},
note = {no. 2-3, pp. 313--321},
}















@article{SinghSpiroff000,
author = {A.K. Singh and S. Spiroff},
title = {Divisor class groups of graded hypersurfaces},
journal = {Contemporary Mathematics},
year = 2007,
volume = {448},
note = {pp. 237-243},
}










@unpublished{Sta16,
author = {The Stacks Project authors},
title = {The Stacks Project},
year = 2016,
note = {Open source textbook  \href{http://stacks.math.columbia.edu/}{Online Link}   },
}















@article{BStSSull001,
author = {B. Sturmfels and S. Sullivant},
title = {Combinatorial Secant Varieties},
journal = {Quarterly Journal of Pure and Applied Mathematics},
year = 2006,
volume = {2},
note = {pp. 285-309  (Special issue in honor of Robert Macpherson).  \href{http://arxiv.org/abs/math/0506223}{arXiv preprint}},
}

@article{Sull001,
author = {S. Sullivant},
title = {Combinatorial Symbolic Powers},
journal = {J. Algebra},
year = 2008,
volume = {319},
note = {no. 1, pp. 115--142. \href{http://arxiv.org/abs/math/0608542}{arXiv preprint} },
}












@article{i-swanson,
author = {I. Swanson},
title = {Linear equivalence of topologies},
journal = {Math.Z.},
year = 2000,
volume = {234},
note = {no.4, pp. 755-775},
}


@unpublished{SzemSzpo01,
author = {T. Szemberg, J. Szpond},
title = {On the containment problem},
year = 2013,
note = {Unpublished Paper.  \href{http://arxiv.org/abs/1601.01308}{arXiv preprint}},
}



@article{TaYo1,
author = {S. Takagi and K. Yoshida},
title = {Generalized test ideals and symbolic powers},
journal = {Michigan Math. J.},
year = 2008,
volume = {57},
note = {pp. 711-725},
}






@article{KT11,
author = {K. Tucker},
title = {F-Signature Exists},
journal = {Invent. Math.},
year = 2012,
volume = {19-},
note = {no.3, pp. 743--765.  \href{https://arxiv.org/pdf/1103.4173.pdf}{arXiv preprint}},
}







@unpublished{MVKorff,
author = {M. Von Korff},
title = {The $F$-Signature of Toric Varieties},
year = 2012,
note = {Dissertation.  \href{\href{http://search.proquest.com/docview/1149994294}{ProQuest Link}  \href{http://arxiv.org/abs/1110.0552}{arXiv preprint}},
}


















@article{Walker001,
author = {R.M. Walker},
title = {Rational Singularities and Uniform Symbolic Topologies},
journal = {Illinois J. Math.},
year = 2017,
volume = {TBD},
note = {To appear.   \href{http://arxiv.org/abs/1510.02993}{arXiv preprint} } 
}




@unpublished{Walker002,
author = {R.M. Walker},
title = { {\em Uniform Harbourne-Huneke Bounds via Flat Extensions} },
year = {2016},
note = {To appear in \textit{Journal of Algebra}.  \href{https://arxiv.org/abs/1608.02320}{arXiv/1608.02320}   },
}


@unpublished{Walker003,
  author = {R.M. Walker},
  title = { {\em Uniform Symbolic Topologies via Multinomial Expansions} },
  year = {2017},
  note = {To appear in \textit{Proceedings of the AMS}.  \href{https://arxiv.org/abs/1703.04530}{arXiv/1703.04530}}
}



@unpublished{Walker004,
author = {R.M. Walker},
title = {Uniform Harbourne-Huneke Bounds via Flat Extensions},
year = 2018,
note = {Unpublished paper.  \href{https://arxiv.org/abs/1608.02320}{arXiv preprint}   },
}


@unpublished{Walker005,
author = {R.M. Walker},
title = {Uniform Harbourne-Huneke Bounds via Flat Extensions},
year = 2018,
note = {Unpublished paper.  \href{https://arxiv.org/abs/1608.02320}{arXiv preprint}   },
}


@unpublished{Walker006,
author = {R.M. Walker},
title = {Uniform Harbourne-Huneke Bounds via Flat Extensions},
year = 2018,
note = {Unpublished paper.  \href{https://arxiv.org/abs/1608.02320}{arXiv preprint}   },
}


@unpublished{Walker007,
author = {R.M. Walker},
title = {Uniform Harbourne-Huneke Bounds via Flat Extensions},
year = 2018,
note = {Unpublished paper.  \href{https://arxiv.org/abs/1608.02320}{arXiv preprint}   },
}



@unpublished{Walker008,
author = {R.M. Walker},
title = {Uniform Harbourne-Huneke Bounds via Flat Extensions},
year = 2018,
note = {Unpublished paper.  \href{https://arxiv.org/abs/1608.02320}{arXiv preprint}   },
}



@unpublished{Walker009,
author = {R.M. Walker},
title = {Uniform Harbourne-Huneke Bounds via Flat Extensions},
year = 2018,
note = {Unpublished paper.  \href{https://arxiv.org/abs/1608.02320}{arXiv preprint}   },
}



@unpublished{Walker010,
author = {R.M. Walker},
title = {Uniform Harbourne-Huneke Bounds via Flat Extensions},
year = 2018,
note = {Unpublished paper.  \href{https://arxiv.org/abs/1608.02320}{arXiv preprint}   },
}




@article{WY04,
author = {K.-i. Watanabe and K.-i. Yoshida},
title = {Minimal relative Hilbert-Kunz multiplicity},
journal = {Illinois J. Math.},
year = 2004,
volume = {48},
note = {no.1, pp. 273-294. \href{https://projecteuclid.org/euclid.ijm/1258136184}{Project Euclid link} },
}
\bibitem{WY04} K.-i. Watanabe and K.-i. Yoshida. {\em Minimal relative Hilbert-Kunz multiplicity.} Illinois J. Math. \textbf{48} (2004), no.1, pp. 273-294. \href{https://projecteuclid.org/euclid.ijm/1258136184}{Project Euclid link} 



@article{Yao06,
author = {Y. Yao},
title = {Observations on the F-signature of local rings of characteristic $p$},
journal = {J. Algebra},
year = 2006,
volume = {299},
note = {no.1, pp. 198-218},
}




@book{SamZarki001,
   author = {O. Zariski and P. Samuel},
   year = {1960},
   title = {Commutative Algebra, Vol. 1},
   publisher = {Springer},
   address = {New York}
}


@book{SamZarki002,
   author = {O. Zariski and P. Samuel},
   year = {1960},
   title = {Commutative Algebra, Vol. 2},
   publisher = {Springer},
   address = {New York}
}










\begin{bibdiv}
\begin{biblist}

\bib{Brodmann}{article}{
      author={Brodmann, M.},
       title={{\em Asymptotic Stability of {$\operatorname{Ass}(M/I^n M)$}}},
        date={1979},
     journal={Proc. of the AMS},
      volume={74},
        note={no.1, 16-18},
}

\bib{5authorSymbolicSurvey}{unpublished}{
      author={Dao, H.},
      author={Stefani, A.~De},
      author={Grifo, E.},
      author={Huneke, C.},
      author={N\'{u}{\~{n}}ez-Betancourt, L.},
       title={{\em Symbolic Powers of Ideals}},
        date={2017},
        note={To appear in \textit{Advances in Singularities and Foliations:
  Geometry, Topology and Applications}, Springer Proceedings in Mathematics \&
  Statistics. \href{https://arxiv.org/abs/1708.03010}{arXiv/1708.03010}},
}

\bib{M2}{unpublished}{
      author={Grayson, D.R.},
      author={Stillman, M.E.},
       title={{Macaulay2, a software system for research in algebraic
  geometry}},
        date={1992},
        note={Available at http://www.uiuc.edu/Macaulay2/},
}

\bib{HNTTBinomial}{unpublished}{
      author={H\`{a}, H.~T.},
      author={Nguyen, H.~D.},
      author={Trung, N.~V.},
      author={Trung, T.~N.},
       title={{Symbolic Powers of Sums of Ideals}},
        date={2017},
        note={\href{https://arxiv.org/abs/1702.01766}{arXiv/1702.01766}},
}

\bib{huneke1986primary}{article}{
      author={Huneke, C.},
       title={The primary components of and integral closures of ideals in
  3-dimensional regular local rings},
        date={1986},
     journal={Mathematische Annalen},
      volume={275},
      number={4},
       pages={617\ndash 635},
}

\bib{HunSwan}{book}{
      author={Huneke, C.},
      author={Swanson, I.},
       title={Integral closure of ideals, rings, and modules},
      series={London Mathematical Society Lecture Note Series, 336},
   publisher={Cambridge University Press},
     address={Cambridge},
        date={2006},
}

\bib{KR}{article}{
      author={Katz, D.},
      author={Ratliff, L.~J.},
       title={{On the prime divisors of $IJ$ when $I$ is integrally closed}},
        date={1988},
     journal={Archiv der Mathematik},
      volume={51},
       pages={520\ndash 522},
}

\bib{JSMilneAG}{unpublished}{
      author={Milne, J.S.},
       title={{Algebraic Geometry}},
        date={2013},
        note={Lecture Notes, Version 5.22.
  \href{http://www.jmilne.org/math/CourseNotes/ag.html}{Online Link}},
}

\bib{Walker003}{unpublished}{
      author={Walker, R.M.},
       title={{\em Uniform Symbolic Topologies via Multinomial Expansions}},
        date={2017},
        note={To appear in \textit{Proceedings of the AMS}.
  \href{https://arxiv.org/abs/1703.04530}{arXiv/1703.04530}},
}

\bib{yao2002primary}{article}{
      author={Yao, Y.},
       title={Primary decomposition: compatibility, independence and linear
  growth},
        date={2002},
     journal={Proceedings of the American Mathematical Society},
      volume={130},
      number={6},
       pages={1629\ndash 1637},
}

\end{biblist}
\end{bibdiv}

\end{document}